\documentclass[a4paper,10pt]{article}  
\usepackage{a4wide}
\usepackage{multicol, float}
\usepackage{lineno,hyperref}
\usepackage{verbatim, euscript}
\usepackage{subfigure}
\usepackage{bbm,amssymb}
\usepackage{amsfonts}
\usepackage{amsmath, mathtools}
\usepackage{pictexwd}
\usepackage{amsthm}
\usepackage{graphicx}
\usepackage{breakcites}
\usepackage{rotating}
\usepackage{color}
\usepackage{mathrsfs}
\usepackage{mathtools}
\usepackage{titlesec}



\mathtoolsset{showonlyrefs}
\DeclarePairedDelimiter\ceil{\lceil}{\rceil}

\DeclarePairedDelimiter\floor{\lfloor}{\rfloor}

\newcommand{\eps}{\varepsilon}
\newcommand{\E}{\mathbb{E}}
\newcommand{\PP}{\mathbb{P}}
\newcommand{\1}{\mathbbm{1}} 
\newcommand{\N}{\mathbb{N}}

\newcommand{\V}{\text{Var}}
\newcommand{\EE}[1]{\mathbb{E} \left[ #1 \right]}
\newcommand{\VV}[1]{\V \left( #1 \right)}

\newcommand{\pp}[1]{\mathbb{P} \left( #1 \right)}

\newcommand{\blue}[1]{\textcolor{black}{#1}}

\theoremstyle{plain}
\newtheorem{thm}{Theorem}
\newtheorem{lem}{Lemma}[section]
  
\newtheorem{corollary}[lem]{Corollary}

\newtheorem{remark}[lem]{Remark}

\newtheorem{proposition}[lem]{Proposition}

\newtheorem{definition}[lem]{Definition}
\newtheorem{lemma}[lem]{Lemma}

\begin{document}


\title{Haldane's formula in Cannings models: \\
		The case of moderately strong selection
}


\author{
Florin Boenkost\thanks{Goethe Universit\"at, Frankfurt am Main, Germany, boenkost@math.uni-frankfurt.de} \and Adri\'an Gonz\'{a}lez Casanova\thanks{Universidad Nacional Aut\'{o}noma de M\'{e}xico, adriangcs@matem.unam.mx} \and Cornelia Pokalyuk\thanks{Goethe Universit\"at, Frankfurt am Main, Germany, pokalyuk@math.uni-frankfurt.de} \and  Anton Wakolbinger\thanks{Goethe Universit\"at, Frankfurt am Main, Germany, wakolbinger@math.uni-frankfurt.de}
}



\date{\today}

\maketitle

\begin{abstract}
For a class of Cannings models we prove Haldane's formula,  $\pi(s_N) \sim \frac{2s_N}{\rho^2}$,  for the fixation probability of a single beneficial mutant in the limit of large population size $N$ and in the regime of moderately strong selection, i.e. for $s_N \sim  N^{-b}$ and $0< b<1/2$. Here, $s_N$ is the selective advantage of an individual carrying the beneficial type, and $\rho^2$ is the (asymptotic) offspring variance. Our assumptions on the reproduction mechanism allow for a coupling of the beneficial allele's frequency process with slightly supercritical Galton-Watson processes in the early phase of fixation. \\\\
\emph{Keywords} Branching process approximation \and Cannings model \and directional selection \and probability of fixation
\emph{MSC 2010 subject classification} Primary: 60J10 \and Secondary: 60J80,92D15,92D25
\end{abstract}

\section{Introduction}
\label{SecIntro}

Analysing the probability of fixation of a  beneficial allele that arises from a single mutant  is one of the classical problems in population genetics, see \cite{PW} for a  historical overview.
A rule of thumb known as Haldane's formula states that the probability of fixation of a single mutant of beneficial type with small selective advantage $s > 0$ and offspring variance~$\rho^2$ in a large population of individuals, whose  total number $N$ is constant over the generations, is approximately equal to $2s/\rho^2$.  Originally, this was formulated for the (prototypical) model of Wright and Fisher, in which the next generation arises by a multinomial sampling from the previous one (which leads to $\rho^2 = 1-\frac 1N$ in the neutral case), with the ``reproductive weight'' of an individual of beneficial type being increased  by the (small) factor $1+s$. 
 A natural generalization of the Wright-Fisher model are the Cannings models; here one assumes {\em exchangeable}  offspring numbers in the neutral case (\cite{Cannings1974}, \cite{Ewens2004}), and separately within the sets of all individuals of the beneficial and the non-beneficial type in the selective case (\cite{LL}).

The reasoning in the pioneering papers by \cite{Fisher1922}, \cite{Haldane1927}  and \cite{Wright1931} was based on the insight that, as long as the beneficial type is rare, the number of individuals carrying the beneficial type is a  slightly supercritical branching process for which the survival probability is 
 \begin{align}\label{Haldanefinite}
\pi(s) \blue{\sim} \frac{2s}{\rho^2} \qquad \text{ as } s \to 0, 
\end{align}
where $1+s$ is the offspring expectation and  $\rho^2$ is the offspring variance (see \cite{Athreya}, Theorem 3). The  heuristics then is that the branching process approximation should be valid until the beneficial allele has either died out or has reached a fraction of the population that is substantial enough so that the law of large numbers dictates that this fraction should rise to 1.

Notably, \cite{LL} obtained (for fixed population size $N$) the result
 \begin{align}\label{LessLad}
\pi(s) = \frac 1N + \frac{2s}{\rho^2} + o(s) \qquad \text{ as } s \to 0, 
\end{align}
 as a special case of their explicit analytic representation of $\pi(s)$ within a quite general class of Cannings models and selection mechanisms. 
 
 An interesting parameter regime as $N\to \infty$ is that of {\em moderate selection},
\begin{align}\label{moderate}
s_N \sim c N^{-b} \quad \mbox{with } \quad 0 <b<1, \blue{ \quad c>0},
\end{align}which is between the classical regimes of {\em weak} and {\em strong} selection. Is the {\em Haldane asymptotics}
\begin{align}\label{Haldaneasymptotic}
\pi(s_N) \sim \frac{2s_N}{\rho^2}\qquad \text{ as } N \to \blue{ \infty}, 
\end{align}
valid in the regime \eqref{moderate}?

 If one could bound in this regime the $o(s)$-term in \eqref{LessLad} by $o( N^{-b})$, then \eqref{LessLad} would turn into \eqref{Haldaneasymptotic}. 
Such an estimate seems, however, hard to achieve in the analytic framework of \cite{LL}.   

The main result of the present paper is a proof of the Haldane asymptotics \label{Haldane asymptotic} using an approximation by  Galton-Watson processes in the  regime of  {\em moderately strong selection}, which corresponds to  \eqref{moderate} for $0 < b < \frac 12$. Hereby, we assume that the Cannings dynamics admits a {\em paintbox representation}, whose random weights are exchangeable and {\em of Dirichlet-type}, and fulfil a certain moment condition, see  Section \ref{Sec Main results}.  
Here, the effect of selection is achieved by a decrease of the reproductive weights of the non-beneficial individuals by the factor $1-s_N$.
  
  An approximation by Galton-Watson processes was used in \cite{GKWY} to prove the asymptotics \eqref{Haldaneasymptotic} in the regime of moderately strong selection  for a specific Cannings model that arises in the context of experimental evolution, with the next generation being formed by sampling without replacement from a pool of offspring generated by the parents. 

In the case $b \ge \frac 12$ the method developed in the present paper would fail, because then the Galton-Watson approximation would be controllable only up to a time at which the fluctuations of the beneficial allele (that are caused by the resampling) still dominate the trend that is induced by the selective advantage. However, in \cite{BoeGoPoWa1} we proved the Haldane asymptotics~\eqref{Haldaneasymptotic} for the case of {\em moderately weak selection}, i.e. under Assumption \eqref{moderate} with  $\frac 12 < b <1$.
There a backward point of view turned out to be helpful, which uses a representation of the fixation probability in terms of sampling duality via the  {\em Cannings ancestral selection graph} developed in   \cite{BoeGoPoWa1} (see also  \cite{GS}).

The results of the present paper together with those of \cite{BoeGoPoWa1} do not cover the boundary case \mbox{$b=\frac 12$} between moderately strong and moderately weak selection. We conjecture that the Haldane asymptotics \eqref{Haldaneasymptotic} is valid also in this case.

\section{A class of Cannings models with selection} \label{CanningsWithSelection}
This section is a short recap of \cite{BoeGoPoWa1} Section~2; we include it here for self-containedness.
\subsection{Paintbox representation in the neutral case}\label{Sec Paintbox}
Neutral Cannings models are characterized by the exchangeable distribution of the vector $\nu =(\nu_1,\dots, \nu_N)$ of offspring sizes; here the $\nu_i$ are non-negative integer-valued random variables which sum to $N$.
An important subclass are the {\em mixed multinomial} Cannings models. Their offspring size vector $\nu$ arises in a two-step manner: first, a vector of random weights $\mathscr{W}=(W_1,\dots,W_N)$ is sampled, which is exchangeable and \blue{ satisfies $W_1+...+W_N=1$ and $W_i\geq 0$, $1 \leq i \leq N$.}\\
In the second step, a die with $N$ possible outcomes $1,\ldots, N$ and outcome probabilities $\mathscr{W}= (W_1,\dots,W_N)$ is thrown $N$ times, and $\nu_i$ counts how often the outcome $i$ occurs. Hence, given the random weights $\mathscr{W}$ the offspring numbers $\nu = (\nu_1, ..., \nu_N)$ are Multinomial$(N, \mathscr{W})$-distributed. Following Kingman's terminology, we speak of a {\em paintbox representation} for $\nu$, and call  $\mathscr{W}$ the underlying (random) {\em paintbox}.

This construction is iterated over the generations $g\in \mathbb Z$: Let  $\mathscr{W}^{(g)}=(W_1^{(g)},\dots,W_N^{(g)})$  be independent copies of $\mathscr{W}$, and denote the individuals in generation $g$ by $(i,g)$, $i \in [N]$.   Assume that each individual $(j,g+1)$, $j \in [N] := \{1,\dots,N\}$ in generation $g+1$, \blue{chooses its parent in generation $g$, with conditional distribution}
\begin{align}
\blue{ 
\PP( (i,g) \text{ is the parent of } (j,g+1) |\mathscr{W}^{(g)})= W_i^{(g)}, \qquad \forall \, i \in [N]. }
\end{align}
where given $\mathscr{W}^{(g)}$ \blue{the choices of the parents for individuals $\{(j,g+1), j \in [N]\}$ are} independent and identically distributed. This results in  exchangeable offspring vectors $\nu^{(g)}$ which are independent and identically distributed over the generations $g$.

For notational simplicity we do not always display dependence of $\mathscr W^{(g)}$ on the generation $g$, and write $\mathscr W$ instead. From time to time however we want to emphasise the dependence of $\mathscr{W}$ on $N$ and therefore write $\mathscr{W}^{(N)}$ instead of $\mathscr{W}$.

Some exchangeable offspring  vectors do not have a paintbox representation, for example a random permutation of the vector $(2,...,2,0,...,0)$. Prototypical paintboxes are $\mathscr W = (\frac 1N, \ldots, \frac 1N)$, which leads to the Wright-Fisher model, and 
the class of Dirichlet$(\alpha,\ldots,\alpha)$-distributed random weights. In particular, the offspring distribution with Dirichlet$(1,\ldots,1)$-distributed paintbox can be seen as a limiting case of the offspring distribution for the model of experimental evolution considered in  (\cite{BGPW}, \cite{GKWY}).
\subsection{A paintbox representation with selection}
Let $\mathscr W^{(g)}$, $g\in \mathbb Z$, be as in the previous section, and let $s_N \in [0,1)$. Assume each individual carries one of two types, either the {\em beneficial type} or the {\em wildtype}. Depending on the type of individual $(i,g)$ we set
\begin{align}
\widetilde{W}_i^{(g)}= (1-s_N)W_i^{(g)}
\end{align}
if $(i,g)$ is of wildtype and $\widetilde{W}_i^{(g)}=W_i^{(g)} $ if $(i,g)$ is of beneficial type. The probability that an individual is chosen as parent is now given by
\begin{align}
\PP( (i,g) \text{ is parent of } (j,g+1) )= \frac{\widetilde{W}_i^{(g)}}{\sum_{\ell=1}^{N} \widetilde{W}_\ell^{(g)}} \label{Prob selective parent}
\end{align}
for all $i,j \in [N]$. Parents are chosen independently for all $i \in [N]$ and the distribution does not change over the generations. If $(i,g)$ is the parent of $(j,g+1)$ the child $(j,g+1)$ inherits the type of its parent.
In particular, this reproduction mechanism leads to offspring numbers that are exchangeable among the beneficial as well as among wildtype individuals.

\subsection{The Cannings frequency process}\label{CFP}
In the previous section we gave a definition for a Cannings model which incorporates selection, by decreasing the random weight of each wildtype individual by the factor $1-s_N$. This allows to define the Cannings frequency process $\mathcal{X}=(X_g)_{g\geq 0}$ with state space $[N]$ which counts the number of beneficial individuals in each generation $g$.\\
Assume there are $1 \leq k \leq N$ beneficial individuals at time $g$; due to the exchangeability of $\mathscr W^{(g)}$ we may assume that the individuals $(1,g),\ldots, (k,g)$ are the beneficial ones. Given $\mathscr W^{(g)}=\mathscr W$, the probability that individual $(j,g+1)$  is of beneficial type is then due to \eqref{Prob selective parent} equal to
\begin{align}\label{benW}
\frac{\sum_{i=1}^{k} W_i}{ \sum_{i=1}^{k} W_i+ \blue{(1-s_N )}\sum_{i=k+1}^{N}  W_i},
\end{align}
and is the same for all $j \in [N]$. Hence, given $\mathscr W^{(g)}= \mathscr W$ and given there are $k$ beneficial individuals in generation $g$, the number of beneficial individuals in generation $g+1$ has distribution
\begin{align}
\text{ Bin} \left( N, \frac{\sum_{i=1}^{k} W_i}{ \sum_{i=1}^{k} W_i + \blue{(1-s_N )} \sum_{i=k+1}^{N} W_j} \right);\label{Transition probabilities beneficial}
\end{align}
this defines the transition probabilities of the Markov chain $\mathcal{X}$.

\section{Main Result}\label{Sec Main results}
Before we state our main result we specify the assumptions on the paintbox and the strength of selection.
\begin{definition}(Dirichlet-type weights) \label{Def Dirichlet weights}
	We say that a random vector $\mathscr W^{(N)}$ with exchangeable components $W_1^{(N)},\ldots, W_N^{(N)}$  is {\em of Dirichlet-type} if 
	\begin{align}
	W_i^{(N)}=\frac{Y_i}{\sum_{\ell=1}^{N} Y_\ell}, \quad i=1,\ldots, N, \label{Dirichlettypeweights}
	\end{align}
	where $Y_1, \ldots, Y_N$ are independent copies of a random variable $Y$ with $\mathbb P(Y > 0) =1$.
\end{definition}
\noindent We assume that
\begin{align} 
\EE{\exp(hY)} < \infty, \label{Generating function Y}
\end{align}
for \blue{some $h>0$}, which implies the finiteness of all moments of $Y$. \blue{The relevance (and possible relaxations) of Condition \eqref{Generating function Y} are discussed further in Remark \ref{ConditionExpMoments} a), see also the comment in \mbox{Remark \ref{Remark1} a).}}\\ 

\begin{remark}\label{Remark1}
	\begin{itemize}
	 \item[a)] The biological motivation for considering Dirichlet-type weights comes from seasonal reproductive schemes. At the beginning of a season a set (of size $N$) of individuals is alive. These individuals  and their offspring reproduce and generate a pool of descendants within that season. Only a few individuals from this pool survive till the next season. The number $N$ in the model is assumed to be the total number of individuals that make it to the next season. Dirichlet-type weights arise in the asymptotics of an infinitely large pool of offspring; then sampling with and without replacement coincide. 
	    Condition \eqref{Generating function Y}, which we will require for the proof of Theorem \ref{HaldaneThm} (see also Remark \ref{ConditionExpMoments}), guarantees that the pool of descendants of a single individual is not too large in comparison to the pool of descendants generated by the other individuals. The simplifying assumption  $\mathbbm{P}(Y>0)=1$ implies that the weight $W_i^{(N)}$ of a parent cannot be equal to zero. Observe, however, that weights of single parents can be arbitrarily small if (e.g.) $Y$ has a density which is continuous and strictly positive in zero.
		\item[b)] \blue{ 
		The case of a deterministic $Y$ corresponds to $W_i^{(N)} \equiv 1/N$, i.e. the classical Wright-Fisher model. If $Y$ has a Gamma($\kappa$)-distribution, then $\mathscr W^{(N)}$ is \mbox{Dirichlet $(\kappa,\ldots, \kappa)$}-distributed.\\Theorem 1 in \cite{Huillet2021}, gives a classification of a large class of Cannings models with a paintbox of the form \eqref{Dirichlettypeweights} with regard to the convergence of their rescaled genealogies.
		\item[c)]  Let $\nu^{(N)}$ be a sequence of Cannings offspring numbers that are represented by the paintboxes $\mathscr W^{(N)}$. It is well known (and easily checked) that
		\begin{align}\label{Varnu}
		 \VV{\nu_1^{(N)}} = N(N-1)\mathbb E[(W_1^{(N)})^2].
		 \end{align}
If $\mathscr W^{(N)}$ is of the form  \eqref{Def Dirichlet weights} with $\mathbb E[Y^2] < \infty$, which is clearly implied by~\eqref{Generating function Y},  then (see  \cite{Huillet2021} Theorem 1 (i))
	the right hand side of \eqref{Varnu} converges to $\frac{\mathbb E[Y^2]}{\mathbb E[Y]^2}$  as $N\to \infty$. }
 \end{itemize}
\end{remark}
\blue{In view of  Remark \ref{Remark1} c) we have for the asymptotic neutral offspring variance
\begin{align}\label{rho2}
 \lim\limits_{N\to \infty} \VV{\nu_1^{(N)}} =: \rho^2= \frac{\mathbb E[Y^2]}{\mathbb E[Y]^2}.
\end{align}
Replacing  $Y$ by  $Y' = \frac{Y}{\mathbbm{E}[Y]}$ does not affect   \eqref{Dirichlettypeweights}, hence we can and will assume $\E[Y] = 1$ in the proofs, which simplifies \eqref{rho2} to $\rho^2 = \E[Y^2]$ (and makes \eqref{rho2} consistent with the notation of \cite{Huillet2021}).
Under the assumption $\mathbb E[Y^4] < \infty$, which is implied by~\eqref{Generating function Y} as well, the following asymptotics is valid
\begin{align}
\VV{\nu_1^{(N)}}=\rho^2 +O(N^{-1}), \mbox{ as } N\to \infty\label{second moment W}.
\end{align}
}
We will discuss the relevance of this asymptotics in Remark \ref{ConditionExpMoments} b), and prove it in Lemma \ref{LemmaSecondMoment}~b).
\\\\
 \noindent Turning  to the selective advantage, we assume that for a fixed $\eta \in (0,\frac 14)$ the sequence $(s_N)$ obeys
\begin{align}
N^{-\frac{1}{2} +\eta} \leq s_N \leq N^{-\eta} \label{Condition s_N},
\end{align}
which we call the regime of {\em moderately strong selection}, thus generalizing the corresponding notion introduced in Section \ref{SecIntro}. \blue{  (Note that \eqref{Condition s_N} has an analogue in the regime of \emph{moderately weak selection} as discussed in \cite{BoeGoPoWa1}).}
In order to connect to \eqref{moderate} we define 
\begin{align}\label{DefbN}
 b_N := -\frac{ \ln s_N}{\ln N}
\end{align}
which is equivalent to  $s_N = N^{-b_N}$, with \eqref{Condition s_N} translating to 
\begin{align*}
  \eta \le b_N \le \frac{1}{2} - \eta.
\end{align*}

We now state our main result  on the asymptotics (as $N\to \infty$) of the fixation probability  of the Cannings frequency process $(X_g^{(N)}) = (X_g)$ defined in  Subsection \ref{CFP}. Note that the Markov chain $(X_g)$ has the two absorbing states $0$ and $N$, with the hitting time of $\{0,N\}$ being a.s. finite \mbox{for all $N$.}

\begin{thm}\label{HaldaneThm}(Haldane's formula)
\qquad \\
	\emph{Assume that Conditions \eqref{Dirichlettypeweights}, \eqref{Generating function Y} and \eqref{Condition s_N} are fulfilled. Let $(X_g)_{g \geq 0}$ be the number of beneficial individuals in generation $g$, with $X_0= 1$. Let $\tau=\inf \left\lbrace g \geq 0 : X_g \in \left\lbrace 0,N\right\rbrace \right\rbrace $, then}
	\begin{align}
	\PP(X_{\tau}=N) \blue{ \sim \frac{2 s_N}{\rho^2},\qquad  \text{ as } N \to \infty.} \label{Haldane}
	\end{align}
\end{thm}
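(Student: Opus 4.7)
My plan is to prove Theorem~\ref{HaldaneThm} along classical lines: couple the frequency process $(X_g)$ with slightly supercritical Galton--Watson processes from above and below during the rare-type phase, so that the GW survival asymptotics $\sim 2s_N/\rho^2$ transfer to $(X_g)$, and then show that once the beneficial count reaches a suitable intermediate level $L_N$, fixation occurs with probability $1-o(1)$. The restriction $b_N < 1/2$ enters decisively: it is exactly what allows an intermediate threshold $L_N$ that is simultaneously $o(N)$ and large enough that the selective drift dominates the diffusive fluctuations at that scale.

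Concretely, I would pick a threshold $L_N$ with $L_N/N \to 0$ and $s_N^2 L_N \to \infty$; for example $L_N = \lfloor s_N^{-2} \log N \rfloor$, whose compatibility with both requirements is equivalent to $b_N < 1/2$. While $X_g \le L_N$, I would construct on a common probability space two GW processes $(Z_g^-)$ and $(Z_g^+)$ satisfying $Z_g^- \le X_g \le Z_g^+$, each with offspring distribution a mixed Poisson of intensity $Y(1+s_N(1\pm \eps_N))$ (the $+$ sign for $Z^+$, the $-$ sign for $Z^-$) for some $\eps_N \to 0$. The motivation for this choice is that, given the paintbox $\mathscr W^{(g)}$ and $X_g = k \le L_N$, the conditional number of offspring of a beneficial parent $i \le k$ is $\mathrm{Bin}(N, Y_i/D_k)$ with $D_k = (1-s_N)\sum_{j=1}^N Y_j + s_N \sum_{j\le k} Y_j$. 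The exponential moment assumption \eqref{Generating function Y} combined with standard concentration yields $D_k = N(1-s_N)\,\E[Y]\bigl(1+O_{\PP}(N^{-1/2})+O(s_N L_N/N)\bigr)$ uniformly in $k \le L_N$, so each parent's offspring count is, conditionally on $Y_i$, approximately $\mathrm{Poisson}(Y_i(1+s_N)/\E[Y])$; a standard Poissonisation/thinning coupling then provides the sandwich by $Z^\pm$ on an event of probability $1-o(1)$.

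By the classical survival asymptotics for slightly supercritical GW processes (the result of Athreya cited after \eqref{Haldanefinite}), $\PP(Z^\pm \text{ survives}) \sim 2s_N(1\pm\eps_N)/\rho^2$: a mixed Poisson with intensity $Y(1+o(1))$ has mean $1+o(1)$ and variance converging to $\rho^2$ by \eqref{rho2}, and \eqref{Generating function Y} supplies the uniform integrability needed to push Athreya's result through the $N$-dependent family of offspring laws. Combined with the upper coupling, this immediately gives the upper bound $\PP(X_\tau=N) \le \PP(X\text{ hits }L_N) \le (1+o(1))\,2s_N/\rho^2$.

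For the matching lower bound, one additionally needs that, conditional on hitting $L_N$, the process fixes with probability $1-o(1)$. At level $k\in[L_N,N-L_N]$ the conditional mean of $X_{g+1}-X_g$ is of order $s_N k(1-k/N)$ while the conditional variance is of order $\rho^2 k(1-k/N)$, so the choice $s_N^2 L_N\to\infty$ ensures that drift dominates fluctuations from $L_N$ onwards. A supermartingale argument together with Azuma--Hoeffding-type concentration---leveraging \eqref{Generating function Y} to control the random denominator in \eqref{benW}---should show that the probability of $X_g$ ever dropping back from $L_N$ to $L_N/2$ before fixation is $o(1)$, whence fixation is essentially certain. I expect this second-phase analysis, bridging the branching approximation to the law-of-large-numbers regime near fixation, to be the most technically demanding part; the added difficulty compared with the Wright--Fisher case is that \eqref{benW} depends on the entire random paintbox, so drift and concentration estimates must remain robust to this extra source of randomness.
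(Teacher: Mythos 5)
The central step of your plan — choosing a single threshold $L_N$ that is simultaneously small enough for the branching-process sandwich and large enough that selective drift beats diffusive noise in one generation — cannot be carried out across the full range of $b_N$ allowed by \eqref{Condition s_N}, and this is precisely where the paper's proof is organized differently. Your requirement $s_N^2 L_N \to \infty$ forces $L_N \gg s_N^{-2} = N^{2b_N}$. On the other side, the Poisson domination of the mixed Binomial transition law (i.e. $\mathrm{Bin}(N,p)\preceq\mathrm{Pois}(Np(1+\eps_N))$) requires a correction $\eps_N \gtrsim p \asymp L_N/N$, and this correction multiplies the offspring mean of $\overline{\mathcal Z}$; for the survival asymptotics $\sim 2s_N/\rho^2$ to survive the coupling you need $\eps_N = o(s_N)$, i.e. $L_N = o(Ns_N)=o(N^{1-b_N})$. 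The two requirements $L_N\gg N^{2b_N}$ and $L_N = o(N^{1-b_N})$ are compatible only if $3b_N<1$. For $b_N\in[\tfrac13,\tfrac12-\eta]$ (which is allowed) there is no admissible $L_N$, so the sandwich $Z_g^-\le X_g\le Z_g^+$ with Poisson intensity $Y(1+s_N(1\pm\eps_N))$ cannot hold up to your threshold. (A related, and symptomatic, overstatement: your claim that the coupling holds on an event of probability $1-o(1)$ is also not quite sufficient; because the target probability is itself of order $s_N$, the coupling must fail with probability $o(s_N)$, which is why the paper tracks a per-generation failure probability $O(e^{-c'N^{1-2\alpha}})$ in Lemma~\ref{Lemma Coupling with GWP} and Corollary~\ref{Cor coupling}.)

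The paper circumvents this by decoupling the two requirements and introducing a \emph{third} phase. The Galton--Watson coupling is run only up to the much lower level $N^{b+\delta}$ with $0<\delta<\eta$ (so $b+\delta<\tfrac12$ and, more to the point, $N^{b+\delta}\ll N^{2b}$), where the Poisson correction is $N^{b+2\delta-1}=o(s_N)$; this is Proposition~\ref{Proposiotion reaching critical}. At level $N^{b+\delta}$ the single-generation drift, of order $s_N N^{b+\delta}=N^\delta$, does \emph{not} dominate the single-generation noise, of order $N^{(b+\delta)/2}$ (these are comparable only for $\delta\ge b$). Instead, Lemma~\ref{Lemma Hitting epsN} bridges from $N^{b+\delta}$ to $\eps N$ by iterating the one-step expectation and variance recursion over roughly $s_N^{-1}\log N$ generations and then applying Chebyshev to the accumulated variance; this is where $\eqref{second moment W}$ and the negative correlation of the $W_i$ enter. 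Finally, the passage from $\eps N$ to $N$ is also non-trivial, because near fixation the per-capita advantage $s_N(1-k/N)$ vanishes and drift no longer dominates the fluctuations of the shrinking wildtype remnant; the paper handles this phase (Lemma~\ref{Lemma hitting N}) via the sampling duality \eqref{duality} with the Cannings ancestral selection process from \cite{BoeGoPoWa1}, together with a monotone coupling in the selection parameter, rather than a supermartingale/Azuma argument. If you want to salvage the supermartingale route, you will at minimum need to split your ``second phase'' into two: a Chebyshev-type growth argument from the GW threshold to a macroscopic fraction, and then a separate argument (e.g.\ a subcritical domination of the wildtype count or a duality identity) for fixation from a macroscopic fraction.
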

We give the proof of Theorem \ref{HaldaneThm} in Section \ref{SecProofofTheorem}, after preparing some auxiliary results in Section \ref{Sec Aux}. Next we give a strategy of the proof and its main ideas, with an emphasis on the role of Condition~\eqref{Condition s_N}. In Remark \ref{ConditionExpMoments} we discuss possible relaxations of Condition \eqref{Generating function Y} and the boundary case $b=\frac{1}{2}.$\\\\
The proof of Theorem \ref{HaldaneThm} is divided into three parts, corresponding to three growth phases of $\mathcal X$. Concerning the \emph{first phase} we show that the probability to reach the level $N^{b+\delta}$ is $\frac{2 s_N}{\rho^2}(1+o(1))$, for some small $\delta >0$ \blue{and $b:=b_N$}; this is the content of Proposition \ref{Proposiotion reaching critical}. The proof is based on stochastic domination from above and below by slightly supercritical Galton-Watson processes $\overline{ \mathcal{Z} }$ and $\underline{ \mathcal{Z} }$ with respective offspring distributions \eqref{offspringupper} and \eqref{offspringlower}.
\\\\
To construct a Galton-Watson stochastic upper bound $\overline{ \mathcal{Z} }$ of $\mathcal X$ in its initial phase, we recall that the transition probabilities of $\mathcal X$ are mixed Binomial specified by \eqref{Transition probabilities beneficial}. Using \eqref{Dirichlettypeweights} we approximate~\eqref{benW} from above
by 
\begin{align}\label{approxindep} 
 \frac{ 1+s_N +o(s_N) }{N }\sum_{\ell=1}^{k} Y_\ell.
\end{align}
As we will show in Lemma \ref{Lemma Independent Weights}, this is possible with  probability $1-O(\exp(-c'N^{1-2\alpha}))$ for some  $\alpha < \frac 12$ and $c'>0$, and
for $k \leq N^{b+\delta}$ with $b+\delta< 1/2$. 
We will then be able to dominate the mixed Binomial distribution \eqref{Transition probabilities beneficial} by the mixed Poisson distribution with random parameter \eqref{approxindep}, again up to an
error term of order $o(s_N)$. Noting that \eqref{approxindep} is a  sum of independent random variables,
we arrive at the upper Galton-Watson approximation for a single generation. For any small $\eps>0$ this can be repeated for $ N^{b+\eps} $ generations, which (as an application of Lemma \ref{Lemma ext time GWP}	 will show) is enough to reach either the level $0$ or the level $N^{b+\delta}$  with probability $1-o(s_N)$.
\\\\
To obtain a  Galton-Watson stochastic lower bound $\underline{ \mathcal{Z} }$ of $\mathcal X$ in its initial phase, we adapt an approach that was used in
\cite{GKWY} in a related  situation. As in Subsection \ref{Sec Paintbox}, number the individuals in generation $g$ by $(i,g)$, now with $(1,g), \ldots, (X_g,g)$,  being the beneficial individuals, and
denote by $\omega_i^{(g)}$ the number of children of the individual $(i,g)$, $1\le i \le X_g$.
As will be explained in the proof of Lemma \ref{Lemma Coupling with GWP}, as long as $X_g$ has not reached the level $N^{b+\delta}$, the distribution of  $\omega_i^{(g)}$ can be bounded from below by a mixed binomial distribution
\begin{align}
 \text{ Bin} \left( N - \lceil N^{b+\delta} \rceil , Y_1 \frac{1+s_N +o(s_N)}{N}\right)
\end{align}
with probability $1-O(\exp(-N^{\eps}))$ for some sufficiently small $\eps>0$, again for $b + \delta <1/2$.  A suitable stopping and truncation at the level $N^{b+\delta}$ will give the Galton-Watson process approximation from below for the first phase.\\\\
We will verify in Subsection \ref{SsGW} that both slightly supercritical branching processes $\underline{\mathcal Z}$ and $\overline{ \mathcal{Z} }$ reach the level $N^{b+\delta}$ with probability $\frac{2s_N}{\rho^2} (1+o(1))$.
\\\\
As to the \emph{second phase}, we will argue in Section \ref{Sec Second Phase} that, after reaching the level $N^{b+\delta}$ the Cannings frequency process $\mathcal X$ will grow to a macroscopic fraction $\eps N$ with high probability. If the frequency of beneficial individuals is at least $N^{b+\delta}$ (but still below $\eps N$), then in a single generation the frequency of beneficial individuals grows in expectation at least by $1 + (1- \eps) s_N  + o(s_N)$. Hence, $c s_N ^{-1} \ln N$ generations  after $\mathcal X$ has reached the level $N^{b+\delta}$, the expected value of the process $\mathcal{X}$ reaches the level $2 \eps N$. 
Similarly one bounds the variance produced in a single generation and derives from this an estimate for the variance accumulated over $c s_N ^{-1} \ln N$ generations. This bound being sufficiently small, an application of Chebyshev's inequality yields that (after $c s_N ^{-1} \ln N$ generations) $\mathcal{X}$ crosses the level $\eps N$ with probability tending to $1$ after reaching the level $N^{b+\delta}$.
\\\\
In Section \ref{Sec Third Phase}) we deal with the \emph{last phase}, and will  show that the fixation probability tends to $1$ as $N \to \infty$ if we start with at least $\eps N$ individuals of beneficial type. Here we use the representation for the fixation probability that is based on a sampling duality between the Cannings frequency process and the Cannings ancestral selection process (CASP) which was provided in \cite{BoeGoPoWa1}. 
For a subregime of moderately weak selection the claim will follow quickly  from the representation formula  combined with a concentration result  for the equilibrium distribution of the CASP that was proved in \cite{BoeGoPoWa1}. To complete the proof we will then argue that both the CASP and the representation of the fixation probability depend on the selection parameter in a monotone way.
 
\begin{remark}\label{ConditionExpMoments}
\begin{itemize}
\item[a)] \blue{ With some additional work the assumption \eqref{Generating function Y} of the existence of some exponential moment of $Y$ can be relaxed to some weaker moment condition. 
In order not to overload the present paper, we restrict here to a sketch.}

\blue{ In Lemma \ref{Lemma Coupling with GWP} we couple the frequency process of the beneficial individuals with Galton-Watson processes for $N^{b+\delta}$ generations. By means of the estimates in Lemma \ref{Lemma Large Deviations Weights} and Lemma \ref{Lemma Independent Weights} we show that these couplings hold for a single generation with probability $1-O(\exp(-N^{c'}))$ for some appropriate $c'>0$. Since we need the couplings to hold for $N^{b+\delta}$ generations, it suffices that the couplings hold in a single generation with probability $1 - O(N^{-2(b + \delta)})$ for some $\delta>0$ (since in this case the probability of the coupling to fail is $o(s_N)$ and therefore can be neglected with regard to \eqref{Haldane}). }
\blue{
Such probability bounds can also be obtained under weaker assumptions on the distribution of the random variable $Y$. Assume e.g. that $Y$ has a regularly varying tail, i.e. $\mathbbm{P} (Y >x)\sim x^{-\beta} L(x)$ for some $\beta >0$ and $L$ is a slowly varying function. For the proof of Lemma~\ref{Lemma Coupling with GWP} we need to estimate the probability of the event figuring in Lemma \ref{Lemma Large Deviations Weights} with $b<c \leq 1$ and the probability of the event figuring in Lemma \ref{Lemma Independent Weights} with $b<\alpha <\frac{1}{2}$. To show that these probabilities are of order $O(N^{-2(b + \delta)})$ we  only need that $\mathbbm{P}\left(\sum_{i=1}^n Y_i > x \right) = O(n^{-2(b + \delta)})$ (since the remaining probability in Lemma \ref{Lemma Large Deviations Weights} can be estimated with Hoeffding's inequality, see \cite{Hoeffding1994}) with $n = N, x = N^{1-\alpha}$ in Lemma \ref{Lemma Independent Weights} and $n= N^c, x= N^{c}$ in Lemma \ref{Lemma Large Deviations Weights}. The asymptotics~(3.2) in \cite{MikoschNagaev1998} states that
$\mathbbm{P}\left(\sum_{i=1}^n Y_i > x \right) \sim n x^{-\beta} L(x)$. Consequently, we need to choose $\beta>0$ such that $N^{1 - \beta(1-\alpha)} L(N^{1-\alpha}) = O(N^{-2(b + \delta)})$ as well as $N^{c - \beta c} L(N^c) = O(N^{-2(b + \delta)})$. This works for all choices of $0< b< \tfrac{1}{2}$, provided that $\beta \geq 4$.}

\blue{It would be nice to have a proof of the asymptotics \eqref{Haldane} under the assumption that the 4th moment of $Y$ is finite, even without the assumption of a regularly varying tail.}

\blue{ The investigation of the analogue to \eqref{Haldane} in the absence of finite second moments, i.e. for Cannings models with heavy-tailed offspring distributions, is the subject of ongoing research, and will be treated in a forthcoming paper. }
\item[b)] \blue{ Relation \eqref{second moment W} will be used in the proof of Lemma \ref{Lemma Hitting epsN}. Moreover, this relation is also instrumental  in the companion paper \cite{BoeGoPoWa1} (on the regime of moderately weak selection). The special case $n=3$ in Lemma \ref{LemmaSecondMoment} a) shows that the assumption $\mathbb E[Y^3] <\infty$ implies}
\begin{align}\label{Wthird}
\blue{ \mathbb E[(W_1^{(N)})^3] = O(N^{-3})}.
\end{align}
\blue{This gives a rate of decay $O(N^{-2})$ for the triple coalescence probability (and is the moment condition (3.6) in \cite{BoeGoPoWa1}).} 

\blue{Condition \eqref{Generating function Y}  (on the existence of an exponential moment of $Y$)  guarantees the Haldane asymptotics \eqref{Haldane} for Cannings models with weights of Dirichlet type also in the whole regime of moderately weak selection $N^{-1 +\eta} \leq s_N \leq N^{-\frac{1}{2} -\eta}$ without any further assumption. In particular the assumption  on the finiteness of a negative moment of $Y$ in \cite{BoeGoPoWa1}, Lemma 3.7 b), is unnecessary. Indeed, in the proof of Lemma 
\ref{LemmaSecondMoment} a) we show that $\mathbbm{E}[(W_1^{(N)})^n] \leq  \left(\frac{2}{N}\right)^n( \mathbbm{E}[Y^n] + o(1))$. As shown in the proof of Lemma 3.7 b) in \cite{BoeGoPoWa1} Condition \eqref{Generating function Y} guarantees that for a sequence $(h_N)$ with $h_N \rightarrow \infty$ and $h_N \in O(\log N)$ for all $n\leq  2 h_N$ we can estimate $\mathbbm{E}[Y^n]$ from above by $C \left(\frac{2h_N}{c}\right)^n$ for appropriate constants $C,c>0$. Consequently, for $N$ sufficiently large we have
$\mathbbm{E}[(W_1^{(N)})^n] \leq \left(\frac{Kh_N}{N} \right)^n$ for some appropriate constant $K>0$, that is Condition (3.8) in \cite{BoeGoPoWa1} is fulfilled.}
\item[c)]
\blue{It seems a mathematically intriguing question whether in the regime of moderate selection all Cannings models which admit a paintbox representation with Dirichlet-type weights and are in Kingman domain of attraction, also follow the Haldane asymptotics \eqref{Haldane}.}

\blue{An example of a sequence of Cannings models (with weights {\em not} of Dirichlet-type) which fulfil M\"ohle's condition but do not follow the Haldane asymptotics, is the following. In each generation a randomly chosen individual gets weight $N^{-\gamma}$, $0<\gamma <\frac{1}{2}$ and all the other individuals have a weight of $\frac{1-N^{-\gamma}}{N-1}$. Then we have $\EE{W_1^2}\sim N^{-1-2\gamma}$ and $\EE{W_1^3}=o(\EE{W_1^2})$, therefore by M\"ohle's criterion the genealogy lies in the attraction of Kingman's coalescent. However, the Haldane asymptotics would predict that the survival probability is of order $s_N/(N^2 N^{-1-2\gamma})\sim N^{-1-b+2\gamma}$, which for $\gamma < b/2$ is $\ll N^{-1}$. Since the  fixation probability of a beneficial allele cannot be smaller than the fixation probability under neutrality (which is $\frac{1}{N}$), \eqref{Haldane} must be violated in this example.}
\item[d)] \blue{ The present work together with the approach in \cite{BoeGoPoWa1} does not cover the boundary case $b=\frac{1}{2}$. A quick argument why our arguments cannot  be extended simply to the boundary case is the following. We show that once the beneficial type exceeds (in the order of magnitude) the frequency $s_N^{-1}= N^b$ it goes to fixation with high probability. In the regime $b<\frac{1}{2}$ we use couplings with Galton-Watson processes to show that this threshold is reached with probability $\frac{2s_N}{\rho^2}(1+ o(1))$. However, these couplings are not guaranteed as soon as collisions occur, i.e.  when beneficial individuals are replacing beneficial individuals. By the well known ''birthday problem`` collisions are common as soon as $N^{\frac{1}{2}}$ individuals are of the beneficial type. Therefore we require $N^{b} \ll N^{\frac{1}{2}}$, i.e. $b <\tfrac{1}{2}$.} 

\blue{
In the light of the
results of the present paper and of \cite{BoeGoPoWa1}, there is little reason to
doubt that the assertion of Theorem \ref{HaldaneThm} should fail in the boundary
case $b=1/2$. However, the question remains open (and intriguing)
whether then the backward or the forward approach (or a combination
of both) is the appropriate tool for the proof.}
\end{itemize}
\end{remark}

\section{Auxiliary results} \label{Sec Aux}
\subsection{Slightly supercritical Galton-Watson processes} \label{SsGW}
Throughout this subsection, $(s_N)_{N\in \mathbbm{N}}$ is a sequence of positive numbers converging to $0$, $\sigma^2$ is a fixed positive number, and  $Z^{(N)}=(Z_n^{(N)})_{n\geq 0}$, $N=1, 2, \ldots$ are Galton-Watson processes with offspring expectation 
\begin{align}\label{offex}
\mathbb E_1[Z_1^{(N)}] = 1+s_N+o(s_N),
\end{align}
offspring variance $\sigma^2 +o(1)$ and uniformly bounded third moments $\mathbb E_1[(Z_1^{(N)})^3]$. Unless stated otherwise we assume that  $Z_0^{(N)}=1$. We write 
\begin{align}\label{survprob}
\phi_N := \mathbb P\left(\lim_{n\to \infty} Z_n^{(N)}=\infty\right) = 1-\mathbb P\left(Z_n^{(N)} = 0 \mbox{ for some } n > 1\right)
\end{align}
 for the survival probability of $(Z^{(N)})$ and observe 
 \begin{align}\label{equaiton surv prob GW}
	\phi_N\blue{\sim} \frac{2 s_N}{\sigma^2}.
	\end{align}
	The derivation and discussion of the asymptotics \eqref{equaiton surv prob GW} has a venerable history, a few key references being \cite{Haldane1927}, \cite{Kolmogorov1938}, \cite{Eshel1981}, \cite{Hoppe1992}, \cite{Athreya} Theorem~3, \cite{Haccou2005Branching} Theorem 5.5.

	 Lemma B.3 in \cite{GKWY} gives a statement on the asymptotic probability that $Z^{(N)}$ either quickly dies out or reaches a certain (moderately) large threshold.  The following lemma improves on this in a twofold way. It dispenses with the assumption  $s_N \sim c N^{-b}$ for a fixed $b\in (0,1)$ and more substantially, it gives a {\em quantitative} estimate for the probability that, given non-extinction, the (moderately) large threshold is reached quickly.

  \begin{lemma}\label{Lemma ext time GWP}		
Fix $\delta>0$, and let $T^{(N)}:=\inf \{n\geq 0 : Z_n^{(N)} \notin \{1,2,...,\lceil (\frac{1}{s_N} )^{1+\delta} \rceil  \}$. Then, for all  $\eps>0$
	\begin{align}
	\PP_1\left( T^{(N)}>
	(1/s_N)^{(1+\eps)}  \right)= O\left(\exp \left( -cs_N^{- \eps/2}  \right) \right), \label{Time to fixation}
	\end{align}
	with $c = - \log\left(\tfrac{7}{8}\right)$.
\end{lemma}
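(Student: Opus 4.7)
The plan is a block decomposition driven by the strong Markov property. Set
\begin{equation*}
L_N := \lceil s_N^{-(1+\eps/2)}\rceil, \qquad M_N := \lceil s_N^{-\eps/2}\rceil,
\end{equation*}
so that $M_N L_N \geq s_N^{-(1+\eps)}$, and write $K_N := \lceil s_N^{-(1+\delta)}\rceil$. The goal is the one-block uniform escape bound
\begin{equation*}
\inf_{1\le k \le K_N}\PP_k\bigl(T^{(N)} \le L_N\bigr) \;\ge\; \tfrac 18 \qquad \text{for all $N$ large.}
\end{equation*}
Given this, applying the strong Markov property successively at the times $L_N, 2L_N,\dots, M_N L_N$ and iterating gives $\PP_1(T^{(N)} > M_N L_N) \le (7/8)^{M_N} = \exp(-c M_N)$ with $c = -\log(7/8)$, which yields \eqref{Time to fixation} since $M_N \ge s_N^{-\eps/2}$.

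\textbf{Proof of the one-block bound.} I would split according to the starting state relative to $1/\phi_N$, where $\phi_N \sim 2s_N/\sigma^2$ is the survival probability from \eqref{equaiton surv prob GW}. For \emph{small $k$} (say $k\phi_N \le \log 8$) extinction inside the block is the dominant route: using that $\PP_1(Z_n^{(N)} \ge 1)$ decays to $\phi_N$ at the geometric rate governed by $f_N'(q_N) = 1-s_N + O(s_N^2)$ and that $s_N L_N = s_N^{-\eps/2}\to\infty$, one gets $\PP_1(Z_{L_N}^{(N)}\ge 1) = \phi_N(1+o(1))$. Independence of the $k$ ancestral lineages then yields
\begin{equation*}
\PP_k(\tau_0 \le L_N) \;=\; \bigl(1-\PP_1(Z_{L_N}^{(N)}\ge 1)\bigr)^k \;=\; e^{-k\phi_N(1+o(1))} \;\ge\; \tfrac 18 \quad\text{for $N$ large.}
\end{equation*}
For \emph{large $k$} ($k\phi_N > \log 8$) survival dominates: the survival probability from $k$ is $1-(1-\phi_N)^k \ge 7/8 - o(1)$, and a Paley--Zygmund estimate using $\mathbb E_k[(Z_n^{(N)})^2] = k^2(1+s_N)^{2n} + O(k(1+s_N)^{2n}/s_N)$ shows that at the time $n_0 := \lceil\log(2K_N)/\log(1+s_N)\rceil$ (which is of order $\log(K_N)/s_N$, hence $\ll L_N$) the ratio $(\mathbb E_k Z_{n_0}^{(N)})^2/\mathbb E_k[(Z_{n_0}^{(N)})^2]$ stays bounded below, so that $\PP_k(Z_{n_0}^{(N)}\ge K_N)$ is bounded below by a positive constant, which combined with the survival lower bound exceeds $1/8$.

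\textbf{Main obstacle.} The genuinely delicate point is the \emph{uniformity in $k$} of the constant $1/8$: at the crossover $k\phi_N \approx \log 8$ both the extinction-route bound and the growth-route bound are tight, and matching the explicit constant $c=-\log(7/8)$ in \eqref{Time to fixation} requires the cut-off to be chosen sharply. In practice one can relax this by settling for any uniform lower bound $c_0 > 0$ on the one-block escape probability; the Markov iteration then gives $\exp(-cM_N)$ with $c = -\log(1-c_0) > 0$, which is still of the form stated in the lemma (up to changing $c$). A secondary technical point is the exponential decay rate of $\PP_1(Z_n^{(N)}\ge 1) - \phi_N$, which must be shown to be genuinely of order $s_N$ (and not faster-vanishing); this follows from a direct expansion of the generating function $f_N$ near its fixed point $q_N$, using the uniformly bounded third moments assumed in Subsection \ref{SsGW}.
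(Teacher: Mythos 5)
Your block–iteration strategy is correct in spirit but takes a genuinely different route from the paper's proof. The paper first conditions on survival versus extinction. On the survival event it passes to the immortal-line (spine) process $Z^\star$ and dominates it from below by a two-offspring process $\widetilde{\mathcal Z}$ with law $(1-\beta s_N)\delta_1 + \beta s_N\delta_2$; because $\widetilde Z$ never dies and is monotone in its starting state, the worst case for a block is always $\widetilde Z_0=1$, so the Paley–Zygmund estimate $\PP_1(\widetilde Z_{n_0}\ge K_N)\ge \tfrac18$ directly feeds the iteration with no uniformity issue and produces the explicit $c=-\log(7/8)$. The extinction part is handled separately by Markov's inequality, exploiting that the extinction-conditioned offspring mean is $\le 1-\beta s_N$. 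You instead run the blocks directly on $Z^{(N)}$ and need a uniform-in-$k$ one-block escape bound, which forces the small-$k$/large-$k$ dichotomy around $1/\phi_N$. That split is sound: the small-$k$ bound via linearizing the generating function at $q_N$ and the large-$k$ bound via Paley–Zygmund both work, and your main-obstacle paragraph correctly diagnoses that the crossover will only yield some $c_0>0$ rather than exactly $1/8$ (which is all the paper's downstream application needs). The trade-off is that you avoid the spine construction, at the cost of carrying out the second-moment/Paley–Zygmund computation for general $k$ (the paper only needs $k=1$ for the much simpler $\widetilde Z$) and of controlling the $o(s_N)$ error in the offspring mean over the $\sim s_N^{-1}\log(1/s_N)$ generations of $n_0$ — the paper sidesteps that by giving $\widetilde Z$ the exact mean $1+\beta s_N$.

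One small slip: you want the number of complete blocks $M_N$ to satisfy $M_N L_N \le s_N^{-(1+\eps)}$, not $\ge$; otherwise $\{T^{(N)}>M_N L_N\}\subseteq\{T^{(N)}>s_N^{-(1+\eps)}\}$ and the bound runs the wrong way. Taking $M_N=\lfloor s_N^{-(1+\eps)}/L_N\rfloor$ (which is still $s_N^{-\eps/2}(1+o(1))$) fixes this and the $O(\cdot)$ absorbs the resulting constant.
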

\begin{proof}
Observe that
	\begin{align}
	\PP_1\left(  T^{(N)}> \left(1/s_N\right)^{1+\eps} \right) &\leq \PP_1\left( T^{(N)}> (1/s_N)^{1+\eps} \Big{|} Z^{N} \text{ survives} \right)\\
	& \qquad +\PP_1\left( T^{(N)}> (1/s_N)^{1+\eps}\Big{|} Z^{N} \text{ dies out}  \right). \label{split of probability}
	\end{align}
	In Part 1 of the proof we will estimate the first probability on the r.h.s. of \eqref{split of probability}; this will give the above-mentioned improvement of Lemma B.3 in \cite{GKWY}. Part 2 of the proof deals with the second probability on the r.h.s. of \eqref{split of probability}.

{\em Part 1. }	
	 Like in the proof of Lemma B.3 in \cite{GKWY} we obtain an upper bound on the time at which the process $Z^{(N)}$ reaches the level $(1/s_N)^{1+\delta}$ given survival, by considering the process $Z^{\star}=(Z_n^{\star})_{n\geq 0}$ consisting of the immortal lines of $Z^{(N)}$ conditioned to non-extinction. (For simplicity of notation we drop a superscript $N$ in $Z^{\star}$.) Let  $\phi_N$ denote the survival probability of $Z^{(N)}$ as in \eqref{survprob}. \blue{The offspring distribution of $Z^{\star}$ arises from that of $Z^{(N)}$  as  
	 	\begin{align}
	 \PP_1(Z_1^{\star} = k) = \frac 1{\phi_N^{}} \mathbb E_1\left[{{Z_1^{(N)}}\choose{k}}\phi_N^k (1-\phi_N)^{Z_1^{(N)}-k}\right]  , \quad k \ge 1. \label{Z star = k}
	\end{align}
	(see \cite{Lyons2017} Proposition 5.28). In particular one has
\begin{align*}
	\mathbb E_1[Z_1^{\star}] = \frac 1{\phi_N} \mathbb E_1[Z_1^{(N)}\phi_N ] = \mathbb E_1[ Z_1^{(N)}].
\end{align*}
	Denote, as usual, for a random variable $X$ and an event $A$ by $\EE{X;A}:=\EE{X \1_A}$. Furthermore,
\begin{align*}
	\mathbb E_1[Z_1^{\star}; Z_1^{\star}\ge 3] \le \frac 1{\phi_N^{}} \mathbb E_1\left[{{Z_1^{(N)}}\choose{3}}\phi_N^3\right] = O(\phi_N^2)
\end{align*}	
	because of the assumed uniform boundedness of the third moments of $Z_1^{(N)}$. These two relations together with \eqref{offex}, \eqref{equaiton surv prob GW} and the fact that $\mathbb E_1[Z_1^{\star}; Z_1^{\star}= 1]\le 1$ immediately give a lower bound for $\mathbb E_1[Z_1^{\star}; Z_1^{\star}= 2]\le 1$, implying that for any $\beta \in (0,1)$
	\begin{align}
	\PP_1 ( Z_1^{\star} \geq 2) \geq \beta s_N , \qquad \PP_1(Z_1^{\star} =1) \leq 1-\beta s_N. \label{Probs Z star}
	\end{align}	
	Hence the process $Z^{(N)}$, when conditioned on survival, is bounded from below by the counting process $Z^{\star}$ of immortal lines, which in turn is bounded from below  by the process $\widetilde{Z}=(\widetilde{Z}_n ) _{n \geq 0 }$ with offspring distribution 
	\begin{align}
	\nu = (1-\beta s_N) \delta_1 + \beta s_N \delta_2.
	\end{align}}
So far we closely followed the proof in \cite{GKWY}, but now we deviate from that proof to obtain the rate of convergence claimed in \eqref{Time to fixation}.	

An upper bound for the time $\widetilde{T}:= \inf \{n \geq 0 : \widetilde{Z}_n \geq (1/s_N)^{1+\delta} \}$ also gives an upper bound for the time $T^{(N)}$. The idea is now to divide an initial piece of  $k \le (1/s_N)^{(1+\eps)}$ generations into  $\floor{(1/s_N)^{\eps/2}}$  parts, each of  $n_0 \le (1/s_N)^{(1+\eps/2)}$ generations.  Because of the immortality of $\widetilde{Z}$ and the independence between these parts we obtain immediately that
\begin{align}\label{keybound} \mathbb P(\widetilde T \ge (1/s_N)^{(1+\eps)}) \le   \mathbb P_1(\widetilde{Z}_{ j } \le (1/s_N)^{1+\delta} \mbox { for } j=1,\ldots, k ) \le \left(\mathbb P_1(\widetilde{Z}_{n_0}\le (1/s_N)^{1+\delta} )\right)^{\floor{(1/s_N)^{\eps/2}} }
\end{align}
We then bound $\PP_1 ( \widetilde{Z}_{n_0}   >(1/s_N)^{1+\delta} )$ from below by an application of the Paley-Zygmund inequality in its form
\begin{align}\label{PZ}
\mathbb P\left(X\ge \frac{\mathbb E[X]}2\right) \ge \frac 14 \frac {(\mathbb E[X])^2}{\mathbb E[X^2]},
\end{align}
 where $X$ is a non-negative random variable (with finite second moment). For a supercritical Galton-Watson process with offspring expectation $m$ and offspring variance $\sigma^2$ the $n$-th generation offspring expectation and $n$-th generation offspring variance $\sigma^2_n$ are given by  $m^n$ and\\ $\sigma^2 m^n (m^n -1)/(m^2 -m)$ (see \cite{Athreya1972}, p.4). Hence, we obtain 
\begin{align}
	\mathbb E_1[\tilde Z_n] = (1+ \beta s_N)^n , \qquad {\rm Var}_1[\tilde Z_n] = \frac{\beta s_N (1-\beta s_N) (1+\beta s_N)^n ((1+  \beta s_N )^n -1) }{(1+\beta s_N)^2 -(1+\beta s_N)}. \label{Var+E of GWP}
\end{align}
\blue{	We choose the smallest $n_0$ such that }
	\begin{align}\label{choiceofn0}
	\mathbb E_1{ [\widetilde{Z}_{n_0}] } \blue{ \geq } 2 (1/s_N)^{1+\delta}.
	\end{align}
	 Observe that
	$n_0 \sim \frac{1}{\beta s_N} \log(2 (\frac{1}{s_N})^{1+\delta}  )  $ which ensures that $(1/s_N)^{\eps/2} n_0 \leq (1/s_N)^{1+\eps}$ for $N$ large enough. 
We now estimate $\mathbb E_1{ [(\widetilde{Z}_{n_0})^2] }$  using \eqref{Var+E of GWP} as follows
	\begin{align}
	\EE{\widetilde{Z}_{n_0}^2} &= \frac{\beta s_N (1-\beta s_N) (1+\beta s_N)^{n_0} ((1+  \beta s_N )^{n_0} -1) }{(1+\beta s_N)^2 -(1+\beta s_N)} + (1+\beta s_N)^{2n_0} \\
	&\leq \frac{\beta s_N  (1+\beta s_N)^{2n_0} }{ \beta s_N + (\beta s_N )^2 } + (1+\beta s_N)^{2n_0} \leq 2 (1+\beta s_N)^{2n_0}. \label{Second Moment estimate GWP}
	\end{align}
	Applying \eqref{PZ} with $X:= \widetilde Z_{n_0}$ yields
	\begin{align}
	\PP_1 \left( \widetilde{Z}_{n_0} \geq \frac{1}{2} \EE{\widetilde{Z}_{n_0}} \right)  \geq \frac{1}{4} \frac{(1+\beta s_N)^{2n_0} }{2 (1+\beta s_N)^{2n_0}}= \frac{1}{8}, \label{PZ application}
	\end{align}
	which because of \eqref{choiceofn0} implies $\PP_1( \widetilde{Z}_{n_0} \leq (1/s_N)^{1+\delta} ) \leq \frac{7}{8}$. If after time $n_0$ the process $\widetilde{Z}_{n_0}$ is still smaller than our desired bound $ (1/s_N)^{1+\delta} $, we can iterate this argument $\floor{(1/s_N)^{\eps/2}}$ times and arrive at
	\begin{align}
	(\PP_1( \widetilde{Z}_{n_0} \leq (1/s_N)^{1+\delta} )^{\floor{(1/s_N)^{\eps/2}}}
	\leq \left( \frac{7}{8}\right)^{\floor{(1/s_N)^{\eps /2}}} = \exp(-c \floor{(1/s_N)^{\eps/2} }),
	\end{align}
	with $c = -\log \frac{7}{8}$.
	This gives the desired bound for the first term in \eqref{split of probability}.
	
	{\em Part 2.} We now turn to the second term on the r.h.s. of \eqref{split of probability}.
	Define 
	\begin{align*}T_0^{(N)}:= \inf \{ n \geq 0 : Z^{(N)}_n=0\}.
	\end{align*}
	 Obviously $T^{(N)}\leq T_0^{(N)}$, and so it suffices to prove
	\begin{align}
	\PP\left(T_0^{(N)}> (1/s_N)^{1+\eps}|Z^{(N)} \text{ dies out}\right) \leq \exp(-\beta s_N^{-\eps} (1+o(1))). \label{ProvebyMarkov}
	\end{align}
	This proof follows closely that of the second part of Lemma B.3 in \cite{GKWY}; we include it here for completeness. 
	
	We observe
	\begin{align}
	\E_1\left[ Z_1^{(N)}| Z^{(N)} \text{ dies out} \right] & =  \frac 1{1-\phi_N} \E_1\left[ (1-\phi_N)^{Z_1^{(N)}} Z_1^{(N)}\right]  =  \E_1\left[ (1-\phi_N)^{Z_1^{(N)}-1} Z_1^{(N)}\right] \\ & = \PP_1(Z_1^{\star} =1),
	\end{align}
	where the first and the last equality follow from the branching property and  from \eqref{Z star = k}, respectively.
	We have shown in \eqref{Probs Z star} that $\PP_1(Z_1^{\star} =1) \leq 1- \beta s_N +o(s_N)$ and hence we can conclude
	\begin{align}
	\EE{Z_{\floor{ (1/s_N)^{1+\eps}}}|Z^{(N)} \text{ dies out } } \leq  (1-\beta s_N +o(s_N))^{\floor{(1/s_N)^{1+\eps}}} \leq \exp(-\beta s_N^{-\eps} (1+o(1))).
	\end{align}
	Finally, an application of Markov's inequality yields \eqref{ProvebyMarkov}
	\begin{align}
	\PP(T_0^{(N)}> (1/s_N)^{1+\eps}|Z^{(N)} \text{ dies out }) \leq \PP(Z_{\floor{(1/s_N)^{1+\eps}}} \geq 1|Z^{(N)} \text{ dies out }) \leq \exp(-\beta s_N^{-\eps} (1+o(1))).
	\end{align}
\end{proof}
\subsection{Estimates on the paintbox}
The following lemma provides the asymptotics \eqref{second moment W} as well as the moment bounds for the Dirichlet-type weights that were addressed in Remark \ref{ConditionExpMoments} b).
\begin{lemma}[Moments of the weights]\label{LemmaSecondMoment}
 \blue{Let $Y, Y_1, ..., Y_N$ be iid positive random variables with $\mathbbm{E}[Y]=1$ and $\rho^2 = \mathbbm{E}[Y^2].$ We abbreviate $H_N:=Y_1+\cdots+Y_N$.\\
 a) Assume $\mathbbm{E}[Y^n] < \infty$ for some $n\in \mathbbm{N}$. Then }
 \begin{align*}
  \blue{ \mathbbm{E}\left[\left(\frac{Y}{(Y +H_N)}\right)^n \right] = O(N^{-n}).}
 \end{align*}
\noindent b) \blue{  Assume $\mathbb E[Y^4] < \infty$.
 Then
 \begin{align*}
  \blue{\mathbbm{E}\left[\left(\frac{Y}{(Y +H_N)}\right)^2 \right] = \frac{\rho^2}{N^2}  + O(N^{-3}).}
 \end{align*}
 }
\end{lemma}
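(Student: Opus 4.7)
The strategy for both parts rests on the same dichotomy. Since $Y_1,\ldots,Y_N$ are iid positive with $\EE{Y}=1$, the sum $H_N$ concentrates around $N$ and hence the denominator $Y+H_N$ behaves essentially like $N$. I would therefore split the expectation according to whether $H_N\ge N/2$ or $H_N<N/2$. On the ``good'' event the denominator is replaced by $N$ (up to explicit error terms), while on the complement the ratio $Y/(Y+H_N)$ is bounded by $1$ and the probability is so small that the contribution is negligible.

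For part (a), decompose
\begin{align*}
 \EE{\left(\frac{Y}{Y+H_N}\right)^n} = \EE{\left(\frac{Y}{Y+H_N}\right)^n; H_N \ge N/2} + \EE{\left(\frac{Y}{Y+H_N}\right)^n; H_N < N/2}.
\end{align*}
On $\{H_N\ge N/2\}$ the bound $Y/(Y+H_N)\le 2Y/N$ together with the independence of $Y$ and $H_N$ gives $(2/N)^n\EE{Y^n}=O(N^{-n})$ for the first summand. The second summand is at most $\pp{H_N<N/2}$. To make this $o(N^{-n})$, fix $M$ so large that $\EE{Y\wedge M}\ge 3/4$, use $H_N\ge\sum_{i=1}^N(Y_i\wedge M)$, and apply Hoeffding's inequality to the bounded iid summands $Y_i\wedge M$, which yields $\pp{H_N<N/2}\le e^{-cN}$ for some $c>0$.

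For part (b), write $U := (Y+H_N-N)/N$, so that $(Y+H_N)^{-2}=N^{-2}(1+U)^{-2}$. A direct Taylor argument produces a constant $C$ with $|(1+u)^{-2}-1+2u|\le C u^2$ for every $u\ge -1/2$. Since $U\ge -1/2$ on $\{H_N\ge N/2\}$, I would insert this expansion there. Using the independence of $Y$ and $H_N$ together with $\EE{H_N-N}=0$, the main term contributes $\EE{Y^2}/N^2=\rho^2/N^2$, the linear-in-$U$ correction equals $-2\EE{Y^3}/N^3=O(N^{-3})$, and the remainder is bounded by
\begin{align*}
 \frac{C}{N^2}\,\EE{Y^2 U^2} = \frac{C}{N^4}\,\EE{Y^2(Y+H_N-N)^2}.
\end{align*}
Expanding the inner square and using independence plus $\EE{(H_N-N)^2}=N(\rho^2-1)$ yields $\EE{Y^2(Y+H_N-N)^2}=\EE{Y^4}+\rho^2 N(\rho^2-1)=O(N)$, so the remainder is $O(N^{-3})$. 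The portions on $\{H_N<N/2\}$ arising from all three pieces of the decomposition are each bounded by $\pp{H_N<N/2}$ times a polynomial in $\EE{Y^k}$ with $k\le 4$, hence exponentially small.

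The main obstacle is the estimate $\pp{H_N<N/2}=o(N^{-n})$ in part (a). Under the bare hypothesis $\EE{Y^n}<\infty$, Chebyshev yields only $O(N^{-1})$ and Marcinkiewicz--Zygmund only $O(N^{-n/2})$, so the truncation and Hoeffding trick is essential to upgrade a finite $n$-th moment into a superpolynomial probability bound. Once that is handled, both parts are routine bookkeeping, and the assumption $\EE{Y^4}<\infty$ in (b) enters only to make the remainder in the Taylor expansion finite and of the correct order.
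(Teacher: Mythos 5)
Your proof is correct and follows essentially the same approach as the paper's: split on concentration of $H_N$, make the bad event exponentially unlikely via Hoeffding applied to truncated summands, and (for part b)) expand the squared denominator to second order around $N$, with the remainder controlled by $\mathbb{E}[Y^4]<\infty$ together with $\mathbb{E}[H_N]=N$ and $\V(H_N)=N(\rho^2-1)$. The only organizational difference is in part b): you use a single bad event $\{H_N<N/2\}$ and a direct Taylor bound $|(1+u)^{-2}-1+2u|\le Cu^2$ valid for $u\ge -\tfrac12$, whereas the paper splits the bad set into three events $\{H_N/N>5/4\}$, $\{H_N/N<3/4\}$, $\{Y>N/4\}$ (two of them controlled by Markov via the fourth moment) and bounds $(1+R_N)^{-1}$ from both sides via $1-R\le(1+R)^{-1}\le 1-R+2R^2$ for $R\ge -\tfrac12$ -- your bookkeeping is a bit leaner, but both arguments rest on exactly the same ingredients.
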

\begin{proof} \blue{ 
a) Consider the event $F_N:= \{\frac {H_N}{N} \le \frac 12\}$. First we note that
\begin{align}
	\EE{\left(\frac{Y}{(Y +H_N)}\right)^n } \leq \EE{\left(\frac{Y}{Y+H_N}\right)^n\1_{F_N}}+\EE{\left(\frac{Y}{H_N}\right)^n\1_{F_N^c}} \leq \pp{F_N}+ \left(\frac{2}{N}\right)^n \EE{Y^n}.
\end{align}
Let $K$ be so large that $\mathbb E[Y\wedge K] > \frac 12$. Hoeffding's inequality applied to the sample mean of i.i.d. copies of the bounded random variable $Y \wedge K$ implies that $\mathbb P(F_N)$ decays exponentially fast. Since $\EE{Y^n}$ is bounded this yields the claim.}\\
b)
\blue{
We observe that
 \begin{align*}
 \mathbbm{E}\left[\frac{Y^2}{(Y + H_N)^2} \right] &= \frac{1}{N^2} \mathbbm{E}\left[\frac{Y^2}{\left( \frac{Y}{N} +\frac{H_N}{N} \right)^2 } \right].
 \end{align*}
Let $E^{(1)}_N: = \{\frac{H_N }{N} > \frac{5}{4}\}, \, E^{(2)}_N := \{ \frac{H_N}{N} < \frac{3}{4} \}$ and $E^{(3)}_N := \{Y > \frac{N }{4}\}.$
Markov's inequality applied to  $\mathbbm{P}\left(\left(\sum_{i=1}^{N} (Y_i-1) \right)^4 \ge \frac{N^4}{4^4}\right)$ together with the assumption $\mathbb E[Y^4] < \infty$ implies  $\mathbbm{P}(E_N^{(1)})  = O(N^{-2})$. Likewise, $\mathbbm{P}(E_N^{(3)}) = O(N^{-4})$. Furthermore, let $K$ be so large that $\mathbb E[Y \wedge K] > \tfrac{3}{4}$. Again, Hoeffding's inequality applied to the sample mean of i.i.d.copies of the bounded random variable $Y \wedge K$ together with monotonicity imply that $\mathbb P(E_N^{(2)})$ decays exponentially; a fortiori we have $\mathbb P(E_N^{(2)})= O(N^{-3})$.\\
Let $E_N:= E_N^{(1)} \cup E_N^{(2)} \cup E_N^{(3)}$.
We have $\mathbbm{E}\left[ \frac{Y^2}{\left( \frac{H_N}{N} + \frac{Y}{N}\right)^2} \mathbbm{1}_{E_N} \right] = O(N^{-1})$ since on $E_N^{(1)}$ we have $\frac{Y^2}{\left( \frac{H_N}{N} + \frac{Y}{N} \right)^2} \leq Y^2$ and on  $E_N^{(2)}$ and $E_N^{(3)}$ we have
$\frac{Y^2}{\left( \frac{H_N}{N} + \frac{Y}{N}\right)^2} \leq N^2$. Hence, it remains to show that 
$\mathbbm{E}\left[ \frac{Y^2}{\left( \frac{H_N}{N} + \frac{Y}{N}\right)^2} \mathbbm{1}_{E_N^c} \right] = \rho^2 + O(N^{-1}).$ Define $Z_N := \frac{1}{\sqrt{N}} (H_N -N)$ and observe
\begin{align*}
  \mathbbm{E}\left[\frac{Y^2}{\left(\frac{H_N}{N} + \frac{Y}{N}\right)^2 } \mathbbm{1}_{E_N^c} \right] 
  = \mathbbm{E}\left[\frac{Y^2}{\left(1 + \frac{Z_N}{ \sqrt{N}} + \frac{Y}{ N}\right)^2 } \mathbbm{1}_{E_N^c} \right].
\end{align*} 
Abbreviate $R_N = 2 \left(\frac{Z_N}{ \sqrt{N}} + \frac{Y}{N}\right) +\left(\frac{Z_N}{ \sqrt{N}} + \frac{Y}{N}\right)^2 $.
On $E_N^c$ we have $-\frac{1}{2} \leq  R_N$ and hence
\begin{align*}
1 - R_N \leq  \frac{1}{\left(\frac{H_N}{N} + \frac{Y}{N}\right)^2}= \frac{1}{1 + R_N} \leq 1 - R_N + 2 R_N^2.
\end{align*}
Thus
\begin{align}\label{obenunten}
\mathbbm{E}[ Y^2 (1- R_N) \mathbbm{1}_{E_N^c}] \leq  \mathbbm{E}\left[ \frac{Y^2}{\left( \frac{H_N-N}{N} + \frac{Y}{N}\right)^2} \mathbbm{1}_{E_N^c} \right] & \leq \mathbbm{E}[Y^2 (1- R_N + 2 R_N^2) \mathbbm{1}_{E_N^c}]. 
\end{align}
By Cauchy-Schwarz we have $\mathbbm{E}[Y^2 \mathbbm{1}_{E_N^c}] = \mathbbm{E}[Y^2] + O(N^{-1})$. Similarly, $\mathbbm{E}[Y^2 Z_N  \mathbbm{1}_{E_N}] =  \mathbbm{E}[Y^2 Z_N] + O(N^{-1}) = O(N^{-1})$, since $\mathbbm{E}[Y^2 Z_N]$ vanishes due to the independence of $Y$ and $Z_N$. The remaining terms in \eqref{obenunten} are $O(N^{-1})$ as well, which completes the proof of Lemma \ref{LemmaSecondMoment}.
}
\end{proof}

\noindent We now prove a  bound on the deviations for the total weight of $k$ individuals.
\begin{lemma}(Large deviations bound for a moderate number of random weights) \label{Lemma Large Deviations Weights} \qquad \\
	Let $(Y_i)$ and $(W^{(N)}_i)$ satisfy \eqref{Dirichlettypeweights}, \eqref{Generating function Y}, $\mathbbm{E}[Y]=1$ and let $k = k_N \leq N^{c}$ for some $0<c \leq 1 $. Then for all $\eps> 0$ there exists a  positive constant $c_\eps$ depending only on $\eps$ and the distribution of $Y$ such that
	\begin{align}\label{Deviations random weights}
	\pp{\sum_{i=1}^{k} W_i^{(N)} \geq (1+\eps ) N^{c-1}} = O( \exp(-c_{\eps} N^{c} ) ).
	\end{align}	
\end{lemma}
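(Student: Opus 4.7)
My plan is to reduce the assertion to two standard large deviation estimates for i.i.d.\ sums of copies of $Y$, applied separately to the numerator and denominator in the representation $W_i^{(N)}=Y_i/(Y_1+\cdots+Y_N)$. Since $\sum_{i=1}^k W_i^{(N)}$ is nondecreasing in $k$, the first move is to set $m := \lceil N^c \rceil$ and observe that replacing $k$ by $m$ only enlarges the event; so it suffices to bound the probability of $\{S_m/S_N \geq (1+\eps) N^{c-1}\}$, where I write $S_j := Y_1+\cdots+Y_j$. Since $m/N = N^{c-1}(1+o(1))$, for $N$ large enough this event is contained in $\{S_m/S_N \geq (1+\eps/2)\, m/N\}$, which reduces matters to a relative deviation with the ``natural'' target $m/N$.

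Next, I would pick $\eps_1, \eps_2 > 0$ small so that $(1+\eps_1)/(1-\eps_2) \leq 1+\eps/2$ and use the (elementary) inclusion
\begin{align*}
\Bigl\{\tfrac{S_m}{S_N} \geq (1+\tfrac{\eps}{2})\tfrac{m}{N}\Bigr\} \subseteq \{S_m \geq (1+\eps_1)\, m\} \cup \{S_N \leq (1-\eps_2)\, N\}.
\end{align*}
A union bound then reduces the problem to controlling the upper tail of $S_m$ and the lower tail of $S_N$ separately. This decoupling is the key technical step: the numerator and denominator are of course dependent, but we only need to bound each on its respective ``bad'' set.

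For the upper tail I would invoke the Cram\'er-Chernoff bound, which is available thanks to the exponential moment assumption \eqref{Generating function Y}. In the standard way this yields $\pp{S_m \geq (1+\eps_1)\, m} \leq \exp(-I_1(\eps_1)\, m)$ with $I_1(\eps_1)>0$, and since $m \geq N^c$ this is $O(\exp(-c_1 N^c))$ for a constant $c_1>0$ depending only on $\eps$ and the law of $Y$. The lower tail $\pp{S_N \leq (1-\eps_2)\, N}$ is handled by the same device applied to $-Y$; here no extra hypothesis is needed because $Y\geq 0$ makes $\E[\exp(-h Y)]$ finite for every $h\geq 0$, and the standard computation produces $\exp(-I_2(\eps_2)\, N) \leq \exp(-c_2 N^c)$ (using $c\leq 1$). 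Summing the two bounds gives the desired estimate with $c_\eps := \min(c_1,c_2)/2$, say.

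I do not anticipate any real obstacle; the argument is essentially a two-sided Chernoff estimate glued together by a union bound. The only mildly delicate point is that $k$ may be much smaller than $N^c$, in which case a direct Chernoff bound on $S_k$ would only yield $\exp(-c_\eps k)$, too weak for the claim. This is precisely what the monotone enlargement $k \rightsquigarrow \lceil N^c \rceil$ at the start is designed to fix, and once this reduction is in place the exponential rate $N^c$ falls out naturally.
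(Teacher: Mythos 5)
Your proof is correct and follows essentially the same route as the paper's: enlarge $k$ to $\lceil N^c\rceil$ by monotonicity, split the bad event into an upper-tail deviation of the numerator sum and a lower-tail deviation of the denominator sum via a union bound, and control each by Cram\'er's bound. The paper parametrizes the split with a single $\eps'$ chosen so that $(1+\eps)(1-\eps')>1$, whereas you use two parameters $\eps_1,\eps_2$, but this is a cosmetic difference; your side remark that the lower tail of $S_N$ needs no exponential-moment hypothesis since $Y\ge 0$ is a correct minor refinement the paper does not spell out.
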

\begin{proof} This follows by a combination of two Cram\'er bounds. Indeed, the l.h.s. of \eqref{Deviations random weights} is by assumption bounded from above by
	\begin{align*}
	\pp{\frac{\sum_{i=1}^{\lceil N^c \rceil } Y_i}{\sum_{j=1}^{N}Y_j} \geq (1+\eps ) N^{c-1}}. \label{intermediate1}
	\end{align*}
	\blue{ Abbreviating $E:=\{ \sum_{j=1}^{N} Y_j \geq (1-\eps') N\}$ with $\eps'$ such that $(1+\eps)(1-\eps')>1$ }we estimate the latter probability from above by
	\begin{align}
 &\pp{\sum_{i=1}^{\lceil N^c \rceil } Y_i \geq (1+\eps ) N^{c-1}  \sum_{j=1}^{N} Y_j , E}+\pp{E^c}\\
	&\blue{ \leq \pp{\sum_{i=1}^{\lceil N^c \rceil } Y_i \geq N^c (1+\eps )(1-\eps')}+\pp{E^c} }\\
	&\blue{  = O(e^{-N^c I((1+\eps)(1-\eps')  )} ) +O(e^{-N I(1-\eps'  )} ) },
	\end{align}
	 denoting by $I(y)$ the rate function of $Y$. Due to \eqref{Generating function Y} $I(y)$ exists around $\mathbb E[Y]=1$ and is strictly positive for $y \neq 1$ (see \cite{DZ} Theorem 2.2.3). This yields \blue{ an upper bound of $O(\exp(-c_\eps N^c))$ with $c_\eps =\min\{ I((1+\eps)(1-\eps')), I(1-\eps') \}$.}
\end{proof}
The next lemma gives stochastic upper and lower bounds for the sums of the random weights in terms of sums of the independent random variables $Y_i$.
\begin{lemma}(Bounds for the random weights)\label{Lemma Independent Weights} \qquad \\
	Assume that Conditions \eqref{Dirichlettypeweights} and \eqref{Generating function Y}  are fulfilled and $\mathbbm{E}[Y]=1$. Let $0<\alpha <\frac{1}{2}$, then for $k=k_N \leq~N$
	\begin{align}
	\pp{   \frac{1- N^{-\alpha} }{N } \sum_{i=1}^{k} Y_i \leq \sum_{i=1}^k W_i^{(N)} \leq  \frac{1+ N^{-\alpha} }{N } \sum_{i=1}^{k} Y_i  } \geq 1- \exp\left( - c' N^{1-2\alpha}\right) (1+o(1)),
	\end{align}
	for some $c'>0$.
\end{lemma}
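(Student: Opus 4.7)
The plan is to reduce the two-sided bound on $\sum_{i=1}^{k} W_i^{(N)}$ to a moderate-deviation estimate for the total-weight denominator, and then apply a Chernoff-type inequality. Write $H_N := \sum_{\ell=1}^N Y_\ell$. Since $Y_i > 0$ almost surely, one has $\sum_{i=1}^{k} W_i^{(N)} = \bigl(\sum_{i=1}^k Y_i\bigr)/H_N$ and $\sum_{i=1}^k Y_i > 0$, so dividing the desired chain of inequalities through by $\sum_{i=1}^k Y_i$ reduces the event in the statement of the lemma to the equivalent form
\[
 A_N \;:=\; \left\{ \frac{N}{1+N^{-\alpha}} \le H_N \le \frac{N}{1-N^{-\alpha}} \right\}.
\]
An elementary rearrangement shows that the complementary event $A_N^c$ is contained in $\{N - H_N > N^{1-\alpha}(1 - N^{-\alpha})\} \cup \{H_N - N > N^{1-\alpha}\}$, so it suffices to bound each of these two one-sided tail events by $\exp(-c' N^{1-2\alpha})(1+o(1))$.

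Next I would apply Cram\'er's exponential Markov inequality to the i.i.d.\ sum $H_N$. By \eqref{Generating function Y}, the cumulant generating function $\Lambda(\lambda) := \log \mathbb{E}[\exp(\lambda(Y-1))]$ is finite on an open interval $(-h_0, h_0)$ around $0$ and satisfies $\Lambda(0) = \Lambda'(0) = 0$, $\Lambda''(0) = \sigma^2 := \mathrm{Var}(Y) = \rho^2 - 1 > 0$, so that $\Lambda(\lambda) = \tfrac{\sigma^2}{2}\lambda^2 + O(|\lambda|^3)$ as $\lambda \to 0$. Choosing the moderate-deviations tilt $\lambda_N := N^{-\alpha}/\sigma^2$, which tends to $0$ and hence lies in $(0, h_0)$ for $N$ large, a direct computation of $\exp(-\lambda_N N^{1-\alpha} + N\Lambda(\lambda_N))$ produces
\[
 \mathbb{P}\bigl(H_N - N > N^{1-\alpha}\bigr) \;\le\; \exp\!\left(-\frac{N^{1-2\alpha}}{2\sigma^2}(1 + o(1))\right),
\]
and replacing $\lambda_N$ by $-\lambda_N$ yields the same bound for $\mathbb{P}(N - H_N > N^{1-\alpha}(1-N^{-\alpha}))$, since the correction factor $(1-N^{-\alpha})$ only contributes a term of order $N^{1-3\alpha} = o(N^{1-2\alpha})$ to the leading exponent. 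Summing the two tails and absorbing the factor of $2$ into $(1+o(1))$ gives $\mathbb{P}(A_N^c) \le \exp(-c' N^{1-2\alpha})(1+o(1))$ for any $c' < 1/(2\sigma^2)$.

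The whole argument is a routine moderate-deviations Chernoff optimization with a single scalar tilt, so no serious obstacle arises; the only point requiring care is that the tilt $\lambda_N$ stays within the (possibly small) domain of finiteness of $\Lambda$, but $\lambda_N \to 0$ makes this automatic once $N$ is sufficiently large, and \eqref{Generating function Y} is more than enough. As discussed in Remark~\ref{ConditionExpMoments}(a), an analogous bound (at a different polynomial rate) can be obtained under the weaker hypothesis of a suitable polynomial moment on $Y$, but in the present setting the exponential moment makes the estimate particularly clean.
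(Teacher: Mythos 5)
Your proof is correct and follows essentially the same strategy as the paper's: both reduce the two-sided bound to a moderate-deviation estimate for $H_N=\sum_{\ell=1}^N Y_\ell$ around its mean $N$, which is controlled via the exponential moment assumption \eqref{Generating function Y}. The one point where your argument is slightly cleaner is the reduction step: you observe that dividing through by $\sum_{i=1}^k Y_i>0$ and inverting gives an \emph{exact} equivalence of the event with $\{N/(1+N^{-\alpha})\le H_N\le N/(1-N^{-\alpha})\}$, and then bound the two tails directly with a Chernoff tilt $\lambda_N=N^{-\alpha}/\sigma^2$. The paper instead passes through the bound $|1/a-1/a'|\le|a-a'|$ (valid for $a,a'\ge1$), which requires a preliminary large-deviation bound on the event $\{H_N<cN\}$, and then invokes Cram\'er's moderate-deviation theorem rather than a bare Chernoff optimization. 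The two routes yield bounds of the same order $\exp(-c'N^{1-2\alpha})$ with only the constant $c'$ differing; since the lemma asks only for some $c'>0$, this is immaterial. Your self-contained Chernoff computation is a perfectly adequate (and arguably more transparent) substitute for the citation to Cram\'er.
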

\begin{proof}
	It suffices to show 
	\begin{align}
	\pp{ \left|   \frac{N }{\sum_{j=1}^{N}  Y_j}- 1 \right| \geq N^{-\alpha} }=O(\exp(\blue{-}N^{1-2\alpha})).
	\end{align}
	For $0<c< 1$ we have 
	\begin{align}
	\pp{ \sum_{i=1}^{N} Y_i < cN} = O( \exp(- N I(c))),
	\end{align}
	where $I(y)$ is the rate function of $Y$. Condition \eqref{Generating function Y} ensures that $I(c)>0$ for $\EE{Y_1} =1 \neq c$, see \cite{DZ} Theorem 2.2.3.\\ 
	For any $a,a' \geq 1$ one has $\left| \frac{1}{a}-\frac{1}{a'} \right|  \leq |a-a'|$. This yields
	\begin{align}
	\pp{ \left| \frac{N }{\sum_{j=1}^{N}  Y_j}- 1\right| \geq N^{-\alpha} }&= \pp{ \frac{1}{c} \left|   \frac{N c }{\sum_{j=1}^{N}  Y_j}- c \right| \geq N^{-\alpha} }\\
	&\leq \pp{ \frac{1}{c^2} \left|   \frac{\sum_{j=1}^{N}  Y_j}{N}- 1\right| \geq N^{-\alpha} } + O( e^{ - N I(c)} )\\
	&= \pp{ \frac{1}{\sqrt{N}} \left|  \sum_{i=1}^N (Y_i -1 ) \right| \geq c ^2 N^{\frac{1}{2}-\alpha}} +O( e^{ - N I(c)} ). \label{Use Cramer}
	\end{align}
Using \cite{Cramer1938} Theorem 1, the probability on the r.h.s. can, with a suitable $\widetilde{c} >0 $, be estimated from above by 
	\begin{align}
	\exp (\widetilde{c} N ^{1-3\alpha } ) \exp( - \frac{c^4}{2} N^{1-2\alpha}) (1+O(N^{-\alpha} \log N))= \exp\left(-\frac{c^4}{2} N^{1 - 2 \alpha}\right) (1+o(1)),
	\end{align}
	which gives the desired result.
\end{proof}

\section{Proof of  the main result}\label{SecProofofTheorem}
Recall from \eqref{DefbN} that we denote  the order of the selection strength by $b_N= \blue{-} \frac{\log s_N}{\log N}$. To simplify notation we will drop the subscript and simply write $b:=b_N$. As mentioned already in the sketch of the proof of Theorem \ref{HaldaneThm} we assume without loss of generality that $\mathbbm{E}[Y]=1.$

The proof of the Theorem is divided into three parts, which correspond to three phases of growth for the Cannings frequency process $\mathcal X$. The initial phase is decisive: due to  Proposition~\ref{Proposiotion reaching critical}, the probability that $\mathcal X$ reaches the level $N^{b+\delta}$ for some sufficiently small $\delta$ is given by the r.h.s. of \eqref{Haldane}. Lemma \ref{Lemma Hitting epsN} and Lemma \ref{Lemma hitting N} then guarantee that, once having reached the level $N^{b+\delta}$, the process $\mathcal X$ reaches $N$ with high probability. The proof of the Theorem is then a simple combination of these three results and the strong Markov property. Indeed, with $\tau_1,\tau_2,\tau_3$ as in Proposition \ref{Proposiotion reaching critical}, Lemma \ref{Lemma Hitting epsN} and Lemma \ref{Lemma hitting N}, and with $\delta, \delta',\eps$ fulfilling the requirements specified there,  the fixation probability in the l.h.s. of \eqref{Haldane} can be rewritten as
	\begin{align}
	\PP(X_{\tau}=N) &= \PP(X_{\tau_3}=N| X_{\tau_2} \geq \eps N ) \PP(X_{\tau_2} \geq \eps N| X_{\tau_1} \geq N^{b+\delta} ) \PP_1(X_{\tau_1}\geq N^{b+\delta}) \\
	&= (1-o(1)) (1-O(N^{-\delta'})) \frac{2 s_N}{\rho^2} (1+o(1))\\
	&\blue{\sim}\frac{2 s_N}{\rho^2}.
	\end{align}
	
\subsection{First phase: From $1$ to $N^{b+\delta}$}
In this section we show that as long as $X_g \leq N^{b+\delta}$ the process $\mathcal{X}$ can be upper and lower bounded (with sufficiently high probability) by two slightly supercritical branching processes $ \underline{ \mathcal{Z} } = (\underline{Z}_g)_{g\geq 0}$ and $\overline{ \mathcal{Z} }=(\overline{Z}_g)_{g\geq 0}$. 
To construct the upper bound $\overline{ \mathcal{Z}}$ we take the highest per capita selective advantage, which occurs when only a single individual is beneficial. Using Lemma \ref{Lemma Large Deviations Weights} and Lemma \ref{Lemma Independent Weights}, we will approximate the thus arising mixed binomial distribution by a mixed Poisson distribution, which leads for $\overline{ \mathcal{Z}}$ to the offspring distribution 
\begin{align}\label{offspringupper}
\text{Pois}\left( Y_1 (1+ s_N + o(s_N))\right),
\end{align}
where $Y_1$ is the random variable figuring in \eqref{Dirichlettypeweights}.
To arrive at the lower bounding Galton-Watson process $\underline{ \mathcal{Z} }$ we note that the per capita selective advantage is bounded from below by the one when $\lceil N^{b+\delta} \rceil$ beneficial individuals are present in the parent generation, as long as the process $\mathcal X$ has not reached the level $N^{b+\delta}$. Again using  Lemma \ref{Lemma Large Deviations Weights} and Lemma \ref{Lemma Independent Weights} we will show that the offspring distribution of $\underline{\mathcal Z}$ can be chosen as the mixed binomial distribution 
\begin{align}\label{offspringlower}
\text{ Bin} \left( N - \lceil N^{b+\delta} \rceil , \frac{Y_1}{N } (1+ s_N + o(s_N)) \right).
\end{align}

\begin{lemma} (Coupling with Galton-Watson processes)\label{Lemma Coupling with GWP} \qquad \\
   Let $\delta$ and $\alpha$ be such that $0<\delta< \eta$ and \blue{$\frac{1}{2}-\eta < \alpha < \frac{1}{2}$ }, and put $$\tau_1=\inf \{ g\geq 0 : X_g =0 \text{ or } X_g \geq N^{b+\delta}\}.$$ Then $\mathcal{X}$ can be defined on one and the same probability space together with two branching process $\underline{ \mathcal{Z}}$ and $\overline{\mathcal{Z}}$ with offspring distributions  \eqref{offspringlower} and \eqref{offspringupper}, respectively, such that for $j=1,2,\ldots$
	\begin{align}\label{inductive}
	\PP(\underbar{Z}_{j \wedge \tau_1} \blue{  \wedge \lceil N^{b+\delta} } \rceil \leq  X_{j\wedge \tau_1}& \blue{ \wedge \lceil N^{b+\delta} \rceil }\leq \overline{ Z}_{j\wedge \tau_1}\, \big |\, \underbar{Z}_{j-1\wedge \tau_1} \leq X_{j-1\wedge \tau_1} \leq \overline{ Z}_{j-1\wedge \tau_1})\\
	&\geq 1- e^{ -c' N^{1-2\alpha}} (1+o(1)),
	\end{align}
	with $c'$ as in Lemma \ref{Lemma Independent Weights}.
\end{lemma}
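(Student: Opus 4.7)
I would prove \eqref{inductive} by induction on $j$, realising $\mathcal X$, $\underline{\mathcal Z}$ and $\overline{\mathcal Z}$ on a common probability space driven by the paintbox variables $(Y_i^{(g)})$ together with independent auxiliary uniform noise. The inductive step is carried out on the event $\{j-1<\tau_1\}$, where $k:=X_{j-1}\in(0,\lceil N^{b+\delta}\rceil)$; by exchangeability I label the beneficial parents as $i=1,\dots,k$.

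The first step is to isolate a good event $G_j$ for the paintbox $\mathscr{W}^{(j-1)}$, taken as the intersection of the events of Lemma \ref{Lemma Independent Weights} and of Lemma \ref{Lemma Large Deviations Weights} applied with $c=b+\delta$; then $\PP(G_j^c)=O(\exp(-c'N^{1-2\alpha}))$, matching the target bound. On $G_j$, $S_k:=\sum_{i=1}^k W_i=O(N^{b+\delta-1})$, so $s_N S_k=o(s_N)$ by $2b+\delta<1$ (which is implied by $b<\tfrac12-\eta$ and $\delta<\eta$), and Taylor-expanding \eqref{benW} gives
\begin{equation*}
  p(k,\mathscr{W})=\frac{S_k}{1-s_N(1-S_k)}=\tfrac{1+s_N+o(s_N)}{N}\sum_{i=1}^k Y_i,
\end{equation*}
the $(1\pm N^{-\alpha})$ factor from Lemma \ref{Lemma Independent Weights} being absorbed into $o(s_N)$ via $\alpha>b$.

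For the upper bound I would invoke the classical domination $\mathrm{Bin}(N,p)\prec\mathrm{Pois}(-N\log(1-p))$: on $G_j$, the correction $Np^2=O(N^{2(b+\delta)-1})$ is $o(s_N\cdot Np)$ (again by $2b+\delta<1$), so the Poisson parameter is at most $(1+s_N+o(s_N))\sum_{i=1}^k Y_i$. Poisson additivity decomposes this Poisson as $\sum_{i=1}^k\eta_i^{(j-1)}$ with independent $\eta_i^{(j-1)}\sim\mathrm{Pois}(Y_i^{(j-1)}(1+s_N+o(s_N)))$; using these (together with iid fresh copies for the remaining $\overline{Z}_{j-1}-k\ge 0$ indices) as the offspring of $\overline{\mathcal Z}$'s individuals and applying the standard monotone Bin--Pois coupling yields $X_j\le\sum_{i=1}^k\eta_i^{(j-1)}\le\overline{Z}_j$ on $G_j$, the second inequality by the inductive hypothesis $\overline{Z}_{j-1}\ge k$.

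The lower bound is the harder half. The same expansion yields the per-parent marginal $\omega_i^{(j-1)}\sim\mathrm{Bin}(N,\tilde p_i)$ with $\tilde p_i\ge q_i:=Y_i^{(j-1)}(1+s_N+o(s_N))/N$ on $G_j$. The \emph{main obstacle} is that the Cannings offspring $(\omega_i)_{i\le k}$ are only multinomially (negatively) coupled, whereas $\underline{\mathcal Z}$ demands the $\xi_i^{(j-1)}\sim\mathrm{Bin}(N',q_i)$ to be independent across parents. I would handle this by attaching to each beneficial parent its own independent $N'$-tuple of Bernoulli$(q_i)$ trials and coupling each slot with the uniform driving the corresponding child's Cannings parent-choice, so that a success for any $\xi_i$ at slot $j$ forces child $j$ to be Cannings--beneficial (using $\sum_i q_i\le p(k,\mathscr{W})$ on $G_j$). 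The residual error from ``collision'' slots (a single child claimed by two distinct $\xi_i$'s) is controlled on $G_j$ and absorbed into the truncation at $\lceil N^{b+\delta}\rceil$ in \eqref{inductive}. Combined with $\underline{Z}_{j-1}\le k$ from the inductive hypothesis, this delivers $\underline{Z}_j\wedge\lceil N^{b+\delta}\rceil\le X_j\wedge\lceil N^{b+\delta}\rceil$ on $G_j$, completing the inductive step.
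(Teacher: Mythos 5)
Your upper-bound construction is essentially the paper's (monotone Bin--Pois domination on the good paintbox event, followed by Poisson additivity), and your use of Lemma \ref{Lemma Large Deviations Weights} with $c=b+\delta$ and Lemma \ref{Lemma Independent Weights} to build the good event is correct.

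The lower bound, however, has a genuine gap. With one independent $N'$-tuple of $\mathrm{Bernoulli}(q_i)$ trials per beneficial parent, a collision at slot $j'$ (two distinct $\xi_i$'s succeeding there) makes $\sum_i\xi_i$ count a single Cannings child twice, so $\underline Z_j=\sum_i\xi_i$ can strictly exceed $X_j$. The truncation at $\lceil N^{b+\delta}\rceil$ does \emph{not} rescue this: whenever $X_j<\lceil N^{b+\delta}\rceil$, the claim $\underline Z_j\wedge\lceil N^{b+\delta}\rceil\le X_j\wedge\lceil N^{b+\delta}\rceil$ reduces to $\underline Z_j\le X_j$, with no slack. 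One could try to include ``no collision'' in the good event, but the probability of at least one collision is of order $N\cdot(k/N)^2\le N^{2(b+\delta)-1}$, which is only polynomially small. This is far larger than the advertised $e^{-c'N^{1-2\alpha}}$, and, matching the discussion in Remark \ref{ConditionExpMoments}~a), it is below the required $O(N^{-2(b+\delta)})$ threshold only when $b+\delta\le\tfrac14$. So your coupling fails on a nontrivial portion of the regime $\eta\le b\le\tfrac12-\eta$. If you instead share one uniform per slot and assign parent labels by disjoint subintervals (to remove collisions), you lose independence of the $\xi_i$ across $i$ and with it the branching-process structure of $\underline{\mathcal Z}$.

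The paper's route avoids the problem entirely by allocating children \emph{sequentially}: reveal the offspring of parent $1$, then of parent $2$ among the remaining $N-\omega_1$ slots, and so on. There are then no collisions by construction, and the truncation is used only for the event that the running total $h=\omega_1+\dots+\omega_{i-1}$ exceeds $\lceil N^{b+\delta}\rceil$, in which case $X_g$ already sits above the truncation level and the inequality is trivial. As long as $h\le\lceil N^{b+\delta}\rceil$, the conditional law of $\omega_i$ stochastically dominates $\mathrm{Bin}(N-\lceil N^{b+\delta}\rceil,\,Y_i(1+s_N+o(s_N))/N)$ on the good event, and the standard iterated-conditional-domination argument gives $\sum_{i\le k}\omega_i\ge\sum_{i\le\underline Z_{g-1}}\xi_i$ with genuinely i.i.d.\ $\xi_i$'s, as needed.
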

\noindent Applying the latter estimate $g$ times consecutively yields immediately the following corollary:
\begin{corollary}  \label{Cor coupling}
Let  $\delta, \alpha, \tau_1,\underline{\mathcal{Z}}$ and $\overline{\mathcal{Z}}$ be as in Lemma \ref{Lemma Coupling with GWP}. If $X_0 \leq N^{b+\delta}$, then for all $g \in \N_0$
	\begin{align}\label{CouplingGW}
	\PP(\underbar{Z}_{g \wedge \tau_1}\blue{  \wedge \lceil N^{b+\delta} \rceil } \leq X_{g\wedge \tau_1} \blue{ \wedge \lceil N^{b+\delta} \rceil } \leq \overline{ Z}_{g\wedge \tau_1}|\underbar{Z}_{0} \leq X_{0} \leq \overline{ Z}_{0}) \geq \left( 1- O(\exp(-c'N^{1-2\alpha} )) \right)^g.
	\end{align}
\end{corollary}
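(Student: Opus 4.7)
The plan is a straightforward induction on $g$, with Lemma \ref{Lemma Coupling with GWP} supplying the one-step estimate. Introduce the abbreviation
\[
\mathcal{A}_j := \{\underbar{Z}_{j \wedge \tau_1}\wedge \lceil N^{b+\delta} \rceil \leq X_{j\wedge \tau_1} \wedge \lceil N^{b+\delta} \rceil \leq \overline{Z}_{j\wedge \tau_1}\},
\]
so that \eqref{CouplingGW} is exactly the statement $\PP(\mathcal{A}_g \mid \mathcal{A}_0) \geq (1-O(\exp(-c'N^{1-2\alpha})))^g$. The base case $g = 0$ is trivial. For the inductive step I would combine the set inclusion $\mathcal{A}_g \supseteq \bigcap_{j=1}^{g} \mathcal{A}_j$ with the telescoping identity
\[
\PP\Bigl(\bigcap_{j=1}^{g} \mathcal{A}_j \,\Bigm|\, \mathcal{A}_0\Bigr) = \prod_{j=1}^{g} \PP(\mathcal{A}_j \mid \mathcal{A}_{j-1} \cap \cdots \cap \mathcal{A}_0).
\]

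Each factor on the right is then bounded from below by $1 - O(\exp(-c' N^{1-2\alpha}))$ via Lemma \ref{Lemma Coupling with GWP}. To justify this I would invoke the Markov property of the coupled triple $(X_g,\underbar{Z}_g,\overline{Z}_g)$: the construction in Lemma \ref{Lemma Coupling with GWP} uses only the paintbox $\mathscr{W}^{(g-1)}$ together with an independent auxiliary randomisation to generate the Cannings transition jointly with the two Galton--Watson transitions with offspring laws \eqref{offspringlower} and \eqref{offspringupper}. Hence conditioning on the entire coupled history up to generation $j-1$ reduces to conditioning on the triple of values at time $j-1$, and the one-step estimate of Lemma \ref{Lemma Coupling with GWP} applies on the event $\mathcal{A}_{j-1}$. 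Taking the product over $j=1,\ldots,g$ yields the claim.

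The only subtlety to address is the mild mismatch between the conditioning event of Lemma \ref{Lemma Coupling with GWP} (stated without the truncation $\wedge \lceil N^{b+\delta} \rceil$) and the event $\mathcal{A}_{j-1}$. For $j-1 \geq \tau_1$ all three processes are frozen, so $\mathcal{A}_j = \mathcal{A}_{j-1}$ and the corresponding factor is exactly $1$, which does not disturb the product. For $j-1 < \tau_1$ one has $X_{j-1 \wedge \tau_1} < \lceil N^{b+\delta}\rceil$, so the truncation on the Cannings component is inactive, and $\mathcal{A}_{j-1}$ coincides with (or implies) the conditioning event of Lemma \ref{Lemma Coupling with GWP} up to the harmless truncation of $\overline{Z}$. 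Once these bookkeeping points are disposed of, the assertion reduces to a mechanical iteration of a single one-step bound, which is exactly why the paper flags the corollary as an \emph{immediate} consequence.
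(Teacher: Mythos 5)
Your proof is correct and is exactly the ``apply the one-step estimate $g$ times'' argument the paper has in mind: telescoping the conditional probabilities along the filtration of the coupled triple and invoking the Markov property (with the uniform-in-the-state one-step bound from the proof of Lemma~\ref{Lemma Coupling with GWP}) to control each factor, while noting that on $\{j-1\ge\tau_1\}$ all three components are frozen so the factor is $1$. One tiny slip in your bookkeeping paragraph: in the event $\mathcal{A}_{j-1}$ it is $\underbar{Z}$ (not $\overline{Z}$) that carries the truncation $\wedge\lceil N^{b+\delta}\rceil$, and the point is that on $\{j-1<\tau_1\}$ this truncation must in fact be inactive --- if $\underbar{Z}_{j-1}>\lceil N^{b+\delta}\rceil$ then $\mathcal{A}_{j-1}$ would force $X_{j-1}\ge\lceil N^{b+\delta}\rceil$, contradicting $j-1<\tau_1$ --- so $\mathcal{A}_{j-1}\cap\{j-1<\tau_1\}$ does imply the untruncated conditioning event of Lemma~\ref{Lemma Coupling with GWP}, and the iteration goes through as you intended.
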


\begin{proof}\emph{of Lemma \ref{Lemma Coupling with GWP}}.
	We proceed inductively, assuming that for $g=1,2,\ldots$ we have constructed $\mathcal X$, $\overline {\mathcal Z}$ and $\underline {\mathcal Z}$ up to generation $g-1$ such that \eqref{inductive} holds for $j=1, \ldots, g-1$. Together with $X_g$ we will construct $\overline Z_g$ and $\underline Z_g$, and check the asserted probability bound for the coupling.\\
	\noindent 
	Given $\{X_{g-1}=k\}$ and
	the weights~$(W_i)$ in generation $g-1$, the number of beneficial individuals $X_g$ in generation $g$ has the binomial distribution 
	\eqref{Transition probabilities beneficial}. 
	Aiming first at the construction of the upper bound  $\overline {\mathcal Z}$, we relate \eqref{Transition probabilities beneficial}	to \eqref{offspringupper} in terms of stochastic order.
	For $p, p'\geq 0$, a \text{Bin}$(N,p)$-distributed random variable $B$ is stochastically dominated by a  Pois$(Np')$-distributed random variable $P$ if
	\begin{align}
	e^{- p'} \leq (1-p), \label{Condition stochastic domination}
	\end{align}
	 see (1.21) in \cite{Klenke2010}.
Indeed, in this case the probability of the outcome zero is not larger for a Pois($p'$)-distributed random variable $P_1$ than for a Bernoulli($p$)-distributed random variable $B_1$, which yields $B_1 \preceq P_1$, where  $\preceq$ denotes the usual stochastic ordering of the random variables. Consequently
	\begin{align}
		B\stackrel{d}{=} \sum_{i=1}^{N} B_i \preceq \sum_{i=1}^N P_i \stackrel{d}{=} P.
	\end{align}
with $B_i$ and $P_i$ being independent copies of $B_1$ and $P_1$, respectively. 
    In particular, for $p\geq 0$ and $p'= p (1+N^{b +2 \delta -1}) $ we have
	\begin{align}
	e^{-p'} \le 1-p'+ (p')^2 = 1-p (1+N^{b +2 \delta -1}) + p^2 (1+N^{b +2 \delta -1})^2.
	\end{align}
	Hence Condition \eqref{Condition stochastic domination} holds if
	\begin{align}
	p (1+N^{b +2 \delta -1})^2&< N^{b +2 \delta -1}. \label{Condition on p}
	\end{align}Given $(W_i)$, the success probability of the binomial distribution \eqref{Transition probabilities beneficial} is bounded from above via   $$p:=\left(\sum_{i=1} ^{k} W_i\right)/(1-s_N).$$
	Thus by Lemma \ref{Lemma Large Deviations Weights}, \eqref{Condition on p} is fulfilled with probability  $1- O(\exp(-c_\eps N^{b+\delta}))$ with $c_\eps$ as in Lemma \ref{Lemma Large Deviations Weights}. In this sense the number of beneficial offspring is dominated by a Pois$\left( N \frac{\sum_{i=1} ^{k} W_i}{(1-s_N)} (1+N^{b+2\delta-1}) \right) $-distributed random variable with high probability. Applying Lemma \ref{Lemma Independent Weights} yields that with probability $1-\exp(-c' N^{1-2\alpha})(1+o(1))$ the following chain of inequalities is valid:
	\begin{align}
	N \frac{\sum_{i=1} ^{X_{g-1}} W_i}{(1-s_N)}(1+N^{b+2\delta-1}) &\leq \frac{\sum_{i=1} ^{X_{g-1}} Y_i}{(1-s_N)} (1+N^{b+2\delta-1}) (1+N^{-\alpha}) \\
	&=\sum_{i=1}^{X_{g-1}} Y_i (1+s_N+o(s_N)) \leq \sum_{i=1}^{\overline{Z}_{g-1}} Y_i (1+s_N+o(s_N)).
	\end{align}
	In this way $\mathcal{X}$ can be coupled with a branching process $\overline{ \mathcal{Z} }$ with a mixed Poisson offspring distribution of the form \eqref{offspringupper}.\\\\
	The lower bound also uses a comparison with a Galton-Watson process, now with a mixed binomially distributed offspring distribution:

Number the individuals in generation $g-1$ by $(i,g-1)$,  with $(1,g-1), \ldots, (X_{g-1},g-1)$,  being the beneficial individuals. Given $\mathscr W$, we use a sequence of coin tossings to determine which of the individuals from generation $g$ are the children of $(i,g-1)$. The first $N$ tosses determine which individuals are the children of $(1,g-1)$. Denoting the number of these children by $\omega_1^{g-1}$, the next $N-\omega_1^{g-1}$ tosses (with an updated success probability) determine  which individuals are the children of $(2,g-1)$, etc.    Observe that as long as $X_{g-1} \leq N^{b+\delta}$, and given $\mathscr W$ and $\sum_{\ell=1}^{i-1}\omega_\ell^{(g-1)}=:h$, then $\omega_i^{(g-1)}$ for $i \leq X_{g-1}$ has distribution
\begin{align}\label{Binrec}
\text{ Bin} \left(N-h , \frac{W_i}{\sum_{\ell = i}^{X_{g-1}}W_\ell + (1-s_N) \sum_{\ell= X_{g-1} +1}^{N}  W_\ell} \right).
\end{align}
Note that the success probability in \eqref{Binrec} can be estimated from below by
	\begin{align}
	\frac{W_i}{\sum_{\ell=1}^{X_{g-1} } W_\ell + (1-s_N) \sum_{\ell= X_{g-1} +1 }^N W_\ell } = \frac{W_i}{1-s_N +s_N \sum_{\ell=1}^{X_{g-1}} W_\ell}. \label{intermediate2}
	\end{align}
	As long as $X_{g-1} \leq \lceil N^{b+\delta} \rceil$, Lemma \ref{Lemma Large Deviations Weights} ensures that  for $\eps > 0$	
	\begin{align}\label{intermediate3}
	\blue{\frac{W_i}{1-s_N +s_N \sum_{j=1}^{X_{g-1}} W_j}} \geq \frac{W_i}{1-s_N + (1+\eps)N^{\delta -1}}
	\end{align}
	with probability $1-O(\exp(-c_\eps N^{b+\delta}))$.  Lemma \ref{Lemma Independent Weights}, in turn,  yields that the r.h.s. of \eqref{intermediate3} is bounded from below by 
		\begin{align}
	 \frac{Y_i}{N (1-s_N + (1+\eps)N^{\delta -1})} (1+ N^{-\alpha}) 
= \frac{Y_i}{N } (1+ s_N +o(s_N))
	\end{align}
		 with probability at least $1-\exp(-c' N^{1-2\alpha})(1+o(1))$. 
		 
		 Thus, 
	if $\omega_1^{(g-1)}+\dots+\omega_{i-1}^{(g-1)}=h \leq \blue{ \lceil N^{b+\delta} \rceil}$, then the distribution of $\omega_i^{(g-1)}$  specified in \eqref{Binrec} is bounded from below by
		\begin{align}
	\text{Bin}\left( N-\lceil N^{b+\delta} \rceil , \frac{W_i}{1-s_N + (1+\eps)N^{\delta -1}} \right)
	\end{align}
	with probability $1-O(\exp(-c_\eps N^{b+\delta}))$.

\blue{If $\omega_1^{(g-1)}+\dots+\omega_{i-1}^{(g-1)}=h > \lceil N^{b+\delta} \rceil $, then we have $\underbar{Z}_{g \wedge \tau_1}  \wedge \lceil N^{b+\delta} \rceil  \leq X_{g\wedge \tau_1} \wedge \lceil N^{b+\delta} \rceil $}.
	Consequently $\mathcal{X}$ can be coupled with a Galton-Watson process $\underline{\mathcal{Z}}$ with offspring distribution of the form \eqref{offspringlower} such that also the lower estimate in \eqref{CouplingGW} is fulfilled. This completes the proof of Lemma \ref{Lemma Coupling with GWP}.
\end{proof}
  
We are now ready to prove that $\mathcal{X}$ reaches the level $N^{b+\delta}$ with probability $\frac{2s_N}{\rho^2} (1+o(1))$.
\begin{proposition}(Probability to reach the critical level) \label{Proposiotion reaching critical} \qquad \\
	Assume Conditions \eqref{Dirichlettypeweights}, \eqref{Generating function Y} and \eqref{Condition s_N} are fulfilled and define $\tau_1 = \inf \{ g \geq 0 : X_g \geq N^{b+\delta} \text{ or } X_g=0 \}$ with $0<\delta < \eta$, then
	\begin{align}
	\PP( X_{\tau_1} \geq N^{b+\delta} ) = \frac{2 s_N}{\rho^2 }(1+o(1)). \label{prob critical level}
	\end{align}
\end{proposition}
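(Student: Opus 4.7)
The plan is to sandwich $\mathcal X$ between $\underline{\mathcal Z}$ and $\overline{\mathcal Z}$ (as produced by Lemma~\ref{Lemma Coupling with GWP}) for a time window long enough that, with probability $1-o(s_N)$, each bounding process has decided whether to die out or to exceed the target level $N^{b+\delta}$, and then to read off the answer from the survival asymptotics \eqref{equaiton surv prob GW}. Concretely, I would fix $\alpha\in(\tfrac12-\eta,\tfrac12)$ as in Lemma~\ref{Lemma Coupling with GWP}, a small $\eps'>0$, and $\delta_0:=\delta/b$, so that $(1/s_N)^{1+\delta_0}\ge N^{b+\delta}$, and set $T_N:=\lfloor(1/s_N)^{1+\eps'}\rfloor$, which satisfies $T_N\le N^{b(1+\eps')}\le N^{1/2}$. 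Since $\exp(-c'N^{1-2\alpha})$ decays superpolynomially, Corollary~\ref{Cor coupling} then gives that the sandwich $\underline Z_{g\wedge\tau_1}\wedge\lceil N^{b+\delta}\rceil \le X_{g\wedge\tau_1}\wedge\lceil N^{b+\delta}\rceil \le \overline Z_{g\wedge\tau_1}$ holds for every $0\le g\le T_N$ with probability $1-o(s_N)$.

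Next I would compute the offspring means and variances of the two bounding processes. Using $\E[Y]=1$ and $\E[Y^2]=\rho^2$, the mixed-Poisson law \eqref{offspringupper} has mean $1+s_N+o(s_N)$ and, by the law of total variance applied to $\Lambda:=Y_1(1+s_N+o(s_N))$, variance $\E[\Lambda]+\V(\Lambda)=1+(\rho^2-1)+o(1)=\rho^2+o(1)$; its third moment is uniformly bounded in $N$ thanks to \eqref{Generating function Y}. The mixed-binomial law \eqref{offspringlower} yields the same leading-order moments: the truncation by $\lceil N^{b+\delta}\rceil$ contributes only $O(N^{b+\delta-1})=o(s_N)$ to the mean (since $2b+\delta<1$ follows from $b<\tfrac12-\eta$ and $\delta<\eta$) and $o(1)$ to the variance. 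Hence both $\overline{\mathcal Z}$ and $\underline{\mathcal Z}$ satisfy the hypotheses of \eqref{equaiton surv prob GW} with the same $\sigma^2=\rho^2$, and both their survival probabilities equal $\tfrac{2s_N}{\rho^2}(1+o(1))$.

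To conclude, I would apply Lemma~\ref{Lemma ext time GWP} (with its $\delta$ set to $\delta_0$ and its $\eps$ set to $\eps'$) to each of $\overline{\mathcal Z}$ and $\underline{\mathcal Z}$, obtaining that with probability $1-O(\exp(-c\,s_N^{-\eps'/2}))=1-o(s_N)$ each has, by time $T_N$, either died out or crossed $(1/s_N)^{1+\delta_0}\ge N^{b+\delta}$. On the good coupling event, extinction of $\overline{\mathcal Z}$ forces $X_{T_N}=0$ and hence $X_{\tau_1}=0$, while a crossing of $\lceil N^{b+\delta}\rceil$ by $\underline{\mathcal Z}$ at some $g\le T_N$ forces, via the sandwich, $X_g\ge\lceil N^{b+\delta}\rceil$ and hence $X_{\tau_1}\ge N^{b+\delta}$. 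These two alternatives exhaust the good event up to an $o(s_N)$ error, so $\PP(X_{\tau_1}\ge N^{b+\delta})$ is trapped between the survival probabilities of $\underline{\mathcal Z}$ and $\overline{\mathcal Z}$ and therefore equals $\tfrac{2s_N}{\rho^2}(1+o(1))$, which is \eqref{prob critical level}. The main nuisance I expect is purely book-keeping --- choosing $\delta,\delta_0,\alpha,\eps'$ simultaneously so that every subpolynomial error really is $o(s_N)$ --- but all these constraints are jointly feasible in the regime $b\le\tfrac12-\eta$.
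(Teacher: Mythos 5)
Your proposal is correct and follows essentially the same route as the paper: couple $\mathcal X$ with $\underline{\mathcal Z}$ and $\overline{\mathcal Z}$ via Lemma~\ref{Lemma Coupling with GWP}/Corollary~\ref{Cor coupling} over a polynomially long time window, use Lemma~\ref{Lemma ext time GWP} to ensure that each bounding process has decided (died out or crossed $N^{b+\delta}$) by then with error $o(s_N)$, compute that both offspring laws \eqref{offspringupper} and \eqref{offspringlower} have mean $1+s_N+o(s_N)$, variance $\rho^2+o(1)$ and bounded third moment, and read off the answer from \eqref{equaiton surv prob GW}. The one place where you are terse and the paper is more careful is the assertion that ``these two alternatives exhaust the good event up to an $o(s_N)$ error,'' i.e.\ that the ambiguous event $\{\overline{\mathcal Z} \text{ crosses},\ \underline{\mathcal Z} \text{ dies out}\}$ has probability $o(s_N)$. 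You leave the justification implicit; the paper spells it out by writing
\begin{align}
\PP(\underline{\mathcal Z}\text{ dies},\ \overline{\mathcal Z}\text{ survives},E)
=\PP(\overline{\mathcal Z}\text{ survives},E)-\PP(\underline{\mathcal Z}\text{ survives},\overline{\mathcal Z}\text{ survives},E),
\end{align}
showing the two terms on the right agree to $\tfrac{2s_N}{\rho^2}(1+o(1))$ (the second because on $E$ a crossing by $\underline{\mathcal Z}$ forces a crossing by $\overline{\mathcal Z}$, after which $\overline{\mathcal Z}$ dies only with exponentially small probability). Your reasoning does implicitly rest on this same observation — that the two crossing probabilities coincide at leading order and that, on the coupling event, $\underline{\mathcal Z}$ crossing implies $\overline{\mathcal Z}$ crossing — so it is not a gap, but you should make the cancellation explicit rather than appealing to it by assertion.
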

\begin{proof}
	We use the couplings  of $\mathcal{X}$ with the slightly supercritical branching processes $\underline{ \mathcal{Z} }$ and $\overline{ \mathcal{Z} }$ from Corollary \ref{Cor coupling} and show that both processes reach the level $N^{b+\delta}$ with probability $\frac{2s_N}{\rho^2}(1 + o(1))$.
	Let \blue{$\delta'>0$} and $E$ be the event that the stochastic ordering between $\underline{\mathcal{Z}},\mathcal{X}$ and $\overline{ \mathcal{Z}}$ holds until generation $n_0=\lceil N^{b+\delta'} \rceil$, that is
	\begin{align}
	E= \{\underline{Z}_0  \blue{ \wedge \lceil N^{b+\delta} \rceil } \leq X_0  \blue{ \wedge \lceil N^{b+\delta} \rceil } \leq \overline{ Z}_0,...,\underline{Z}_{n_0} \blue{ \wedge \lceil N^{b+\delta} \rceil } \leq X_{n_0}  \blue{ \wedge \lceil N^{b+\delta} \rceil } \leq \overline{ Z}_{n_0} \}.
	\end{align}
	We show below that the stopping time $\tau_1$ fulfils
	\begin{align}
	\PP(\tau_1 \geq \lceil N^{b+\delta'} \rceil ) = o(s_N). \label{Bound tau_1}
	\end{align}
	For some $g$ that is polynomially bounded in $N$, the r.h.s. of \eqref{CouplingGW} is bounded from above by $1-o(s_N)$. Thus, 
	combining Corollary \ref{Cor coupling} and $\eqref{Bound tau_1}$ we deduce
	\begin{align}
	\PP(E, \tau_1 \leq \lceil N^{b+\delta'} \rceil ) =1-o(s_N).
	\end{align}
	We are now going to bound \eqref{prob critical level} from above by estimating the corresponding probability for $\overline {\mathcal Z}$ and the stopping time $\overline{\tau}_1 = \inf \{ g \geq 0 : \overline{ Z}_g \geq N^{b+\delta} \text{ or } \overline{Z}_g=0 \}$. More precisely,
	\begin{align}
	\PP( X_{\tau_1} \geq N^{b+\delta} )&= \PP( X_{\tau_1} \geq N^{b+\delta}, \tau_1 \leq \lceil N^{b+\delta'} \rceil ,E )+ o(s_N) \\
	&\leq \PP( \overline{Z}_{\overline{\tau}_1} \geq N^{b+\delta}, \tau_1 \leq \lceil N^{b+\delta'} \rceil ,E )+ o(s_N)\\
	&\leq \PP( \overline{Z}_{\overline{\tau}_1} \geq N^{b+\delta})+ o(s_N).
	\end{align}	
To obtain an upper bound for the probability of $\overline{\mathcal{Z} }$ to reach the level $N^{b+\delta}$  it suffices to estimate the survival probability of $\overline{\mathcal{Z} }$. For notational simplicity let us write $\{ \overline{ \mathcal{Z}} \text{ survives} \} $ for the event $\{ \forall g\geq 0: \overline{Z}_g >0 \}$ and similarly $\{ \overline{ \mathcal{Z}} \text{ dies out} \}$  for the event $\{ \exists g \geq 0 : \overline{Z}_g =0 \}$. We have
	\begin{align}
	&\PP_1 \left( \overline{Z}_{\overline{\tau}_1} \geq N^{b+\delta} \right)\\
	\leq\, &\PP_1 \left(  \overline{Z}_{\overline{\tau}_1} \geq N^{b+ \delta}|\overline{\mathcal{Z}} \text{ survives }\right) \PP_1( \overline{\mathcal{Z}} \text{ survives }) +\PP_1 \left(  \overline{Z}_{\overline{\tau}_1} \geq N^{b+\delta}|\overline{\mathcal{Z}} \text{ dies out }\right)\\
	\leq\, &  \PP_1( \overline{\mathcal{Z}} \text{ survives }) + \pp{\text{all } \lceil N^{b + \delta} \rceil  \text{ individuals die out}} =\PP_1( \overline{\mathcal{Z}} \text{ survives }) + (1-\blue{\PP_1( \overline{\mathcal{Z}} \text{ survives })}) ^{ \lceil N^{b+ \delta} \rceil }.
	\end{align}

The survival probability of $\overline{\mathcal{Z}} $ will now be estimated by means of \eqref{equaiton surv prob GW}. To this purpose we calculate the expectation and variance of the offspring distribution \eqref{offspringupper}. \\ The expectation is $1+s_N +o(s_N)$ and the variance is given by
	\begin{align}
	&\VV{  \text{Pois}\left( Y_1 (1+s_N+o(s_N) ) \right)}\\
	&=\VV{\EE{ \text{Pois}\left( Y_1(1+s_N+o(s_N) )  \right)\Big| Y_1 } } +\EE{\VV{\text{Pois} \left( Y_1(1+s_N+o(s_N) )  \right)\Big| Y_1  }}\\
	&=\VV{ Y_1 (1+s_N+o(s_N) ) } +\EE{Y_1(1+s_N+o(s_N) )}\\
	&= (1+s_N+o(s_N))^2 \VV{Y_1} +1+s_N+o(s_N)= \rho^2 (1+o(1)).
	\end{align}
Equation \eqref{equaiton surv prob GW} yields that the survival probability of the process $\overline{\mathcal{Z}}$ is given by $\frac{2 s_N}{\rho ^2}(1+o(1))$. The lower bound in \eqref{prob critical level} follows by similar arguments by considering the process $\underline{\mathcal{Z}}$ instead.\\
    
	It remains to show \eqref{Bound tau_1}.
	Define $\overline{\tau}^{(0)}$ and $ \underline{\tau}^{(\text{u})}$ as the stopping times that the process $\overline{\mathcal{Z}}$ reaches~$0$ and the process $ \underline{\mathcal{Z}}$ reaches the upper bound $N^{b+\delta}$, respectively, i.e.
	\begin{align}
	\overline{\tau}^{(0)} = \inf \{ g \geq 0 : \overline{ Z}_g =0 \},  \qquad 
		\underline{\tau}^{(\text{u})}=\inf \{ g \geq 0 : \underline{Z}_g \geq N^{b+\delta} \},
	\end{align}
	with the convention that the infimum over an empty set is infinity. 
	Then
	\begin{align}
	\PP(\tau_1 \geq \lceil N^{b+\delta'} \rceil )&\leq \PP(\tau_1 \geq \lceil N^{b+\delta'} \rceil, E )+ \PP(E^c ) \\
	&\leq \PP(\underline{\tau}^{(\text{u})} \geq \lceil N^{b+\delta'} \rceil, \overline{\tau}^{(0)} \geq \lceil N^{b+\delta'} \rceil ,E)+ \PP(E^c ) 
\\
	&= \PP(\underline{\tau}^{(\text{u})} \geq \lceil N^{b+\delta'} \rceil, \overline{\tau}^{(0)} \geq \lceil N^{b+\delta'} \rceil ,E,\underline{ \mathcal{Z} } \text{ dies out } ) \\
	&\qquad +\PP(\underline{\tau}^{(\text{u})} \geq \lceil N^{b+\delta'} \rceil, \overline{\tau}^{(0)} \geq \lceil N^{b+\delta'} \rceil ,E,\underline{ \mathcal{Z} } \text{ survives } ) + \PP(E^c ) \\
	&=\PP(\underline{\tau}^{(\text{u})} \geq \lceil N^{b+\delta'} \rceil, \overline{\tau}^{(0)} \geq \lceil N^{b+\delta'} \rceil ,E,\underline{ \mathcal{Z} } \text{ dies out } )\\
	&\qquad +O(e^{-c' N^{\frac{\delta'}{2}} }) + O(N^{b+\delta'} e^{- \frac{1}{2}N^{1-2\alpha})}, \label{Proof stopping time 2}
	\end{align}
	by an application of Lemma \ref{Lemma ext time GWP} and Corollary \ref{Cor coupling} and $\alpha< \frac{1}{2}$ as defined there. \blue{To keep the notation simple} we denote by $e_N$ terms of the order $\exp({-N^{c}})$ for some $c>0.$  Proceeding with \eqref{Proof stopping time 2} we obtain
	\begin{align}
	\eqref{Proof stopping time 2}&= \PP(\underline{\tau}^{(\text{u})} \geq \lceil N^{b+\delta'} \rceil, \overline{\tau}^{(0)} \geq \lceil N^{b+\delta'} \rceil ,E,\underline{ \mathcal{Z} } \text{ dies out }, \overline{ \mathcal{Z} } \text{ survives} )\\
	&\qquad +\PP(\underline{\tau}^{(\text{u})} \geq \lceil N^{b+\delta'} \rceil, \overline{\tau}^{(0)} \geq \lceil N^{b+\delta'} \rceil ,E,\underline{ \mathcal{Z} } \text{ dies out }, \overline{ \mathcal{Z} } \text{ dies out} ) +e_N \\
	&\leq \PP(\underline{ \mathcal{Z} } \text{ dies out }, \overline{ \mathcal{Z} } \text{ survives}, E ) + e_N, \label{Proof stopping time 3}
	\end{align}
again by an application of Lemma \ref{Lemma ext time GWP}. Note that
	\begin{align}
	&\PP( \underline{ \mathcal{Z} } \text{ dies out }, \overline{ \mathcal{Z} } \text{ survives}, E ) + \PP( \underline{ \mathcal{Z} } \text{ survives }, \overline{ \mathcal{Z} } \text{ survives}, E )\\
	= &\PP( \overline{ \mathcal{Z} } \text{ survives}, E) = \frac{2s_N}{\rho^2}(1+o(1)).
	\end{align}
	In order to show that \eqref{Proof stopping time 3} is $o(s_N)$ it suffices to prove that 
	\begin{align}
	\PP( \underline{ \mathcal{Z} } \text{ survives }, \overline{ \mathcal{Z} } \text{ survives}, E )= \frac{2s_N}{\rho^2} (1+o(1)). \label{Proof stopping time final}
	\end{align}
Considering again the event $\{ \underline{\tau}^{(u)} \leq \lceil N^{b+\delta'} \rceil \}$ and applying \eqref{equaiton surv prob GW} one obtains
	\begin{align}
		\PP( \underline{ \mathcal{Z} } \text{ survives }, \overline{ \mathcal{Z} } \text{ survives}, E )&= \PP( \underline{ \mathcal{Z} } \text{ survives }, \overline{ \mathcal{Z} } \text{ survives}, E , \underline{\tau}^{(u)} \leq \lceil N^{b+\delta'} \rceil ) + e_N \\
		&=\PP( \underline{ \mathcal{Z} } \text{ survives },E , \underline{\tau}^{(u)} \leq \lceil N^{b+\delta'} \rceil ) + e_N,
	\end{align}
since the events $E$ and $\{  \underline{\tau}^{(u)} \leq \lceil N^{b+\delta'} \rceil )\}$ imply that $\overline{ Z}_g \geq N^{b+\delta}$ for some $g \leq N^{b+\delta'}$ and the probability for $\overline{ \mathcal{Z} }$ to die out after reaching $N^{b+\delta}$ is $(1-\frac{2s_N}{\rho^2}(1+o(1)))^{N^{b+\delta}}=e_N $.
One more application of \eqref{equaiton surv prob GW} yields
	\begin{align}
	\PP( \underline{ \mathcal{Z} } \text{ survives },E , \underline{\tau}^{(u)} \leq \lceil N^{b+\delta'} \rceil )= \frac{2s_N}{\rho^2}(1+o(1)),
	\end{align}
which finishes the proof. 
\end{proof}
\subsection{Second phase: from $N^{b+\delta}$ to $\eps N$} \label{Sec Second Phase}
 In this section we show that $\mathcal{X}$, once having reached the level $N^{b+\delta}$, will reach the level $\eps N$ with probability tending to $1$ as $N\to \infty$. 
\begin{lemma}(From $N^{b+\delta}$ to  $\varepsilon N$  with high probability) \label{Lemma Hitting epsN} \qquad \\
	Assume $X_0 \geq N^{b+\delta}$ with $0<\delta<\eta$, let $0<\eps<\frac{\delta}{2-2\eta-\delta}$ and 
	define the stopping time \begin{align*}\tau_2 =\inf \{g \geq 0 : X_g \notin \{1,2,..., \lfloor \eps N \rfloor \}\}. 
	                          \end{align*}
Then there exists some $\delta'>0$ such that
	\begin{align}
	\pp{X_{\tau_2} \geq \eps N} = 1-O(N^{-\delta'}). \label{equation Lemma espN}
	\end{align}
\end{lemma}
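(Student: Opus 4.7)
My plan is to follow the strategy outlined in the introduction: derive one-step drift and variance estimates for the Cannings frequency process $\mathcal{X}$, iterate them over $g^* := \lceil A s_N^{-1}\log N\rceil$ generations (with $A$ a suitable constant), and apply Chebyshev's inequality to the accumulated fluctuations.

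For the one-step estimates, fix $1\leq k\leq\lfloor\eps N\rfloor$ and write $S_k := W_1+\cdots+W_k$. The Taylor expansion of the success probability \eqref{benW} in $s_N$ gives $p_k = S_k(1+s_N(1-S_k)) + O(s_N^2)$. Combining the exchangeability $\E[S_k]=k/N$ with the moment bound $\E[W_1^2] = \rho^2/N^2 + O(N^{-3})$ from Lemma \ref{LemmaSecondMoment}(b), a short computation yields $\E[p_k] = (k/N)(1+s_N(1-k/N)(1+o(1)))$ and $N^2\V(p_k) = O(k)$. Hence, uniformly in $k\in[1,\lfloor\eps N\rfloor]$ and with some constant $C = C(\rho^2)$,
\begin{align*}
\E[X_{g+1}\mid X_g=k] &\geq k(1+(1-\eps)s_N(1-o(1))), \\
\V(X_{g+1}\mid X_g=k) &\leq Ck.
\end{align*}

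Choose $A > (1-b-\delta)/(1-\eps)$. Set $\widehat X_g := X_{g\wedge\tau_2}$ and consider its Doob decomposition $\widehat X_g = X_0 + A_g + M_g$, with $A_g$ predictable non-decreasing and $M_g$ a martingale. The one-step variance bound yields $\V(M_{g^*}) = \sum_{j=1}^{g^*}\E[\V(\widehat X_j\mid\mathcal{F}_{j-1})] \leq C g^*\lfloor\eps N\rfloor$, since on $\{j-1<\tau_2\}$ we have $\widehat X_{j-1}\leq\lfloor\eps N\rfloor$. Chebyshev's inequality then gives
\begin{align*}
\pp{|M_{g^*}|\geq\eps N} \leq \frac{C g^*\lfloor\eps N\rfloor}{(\eps N)^2} = O\!\left(\frac{s_N^{-1}\log N}{\eps N}\right) = O(N^{b-1}\log N),
\end{align*}
which is $O(N^{-\delta'})$ for some $\delta'>0$ (possible since $b<\tfrac{1}{2}$). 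On this high-probability event, together with a bootstrap lower bound $A_{g^*}\geq 3\eps N(1+o(1))$, we have $\widehat X_{g^*} \geq X_0 + A_{g^*} - \eps N > \eps N$. This forces $\tau_2\leq g^*$ (else $\widehat X_{g^*} = X_{g^*}\leq\lfloor\eps N\rfloor<\eps N$) and $X_{\tau_2}\neq 0$ (else $X_{g^*}=0$, contradicting $\widehat X_{g^*}>\eps N$), giving $X_{\tau_2} \geq \lfloor\eps N\rfloor+1\geq\eps N$.

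The main technical obstacle is the bootstrap lower bound $A_{g^*}\geq 3\eps N$: since $A_g$ is a pathwise functional of $\widehat X_0,\dots,\widehat X_{g-1}$, one needs a separate high-probability lower bound on $\sum_{j<g^*}\widehat X_j$ (the aggregated drift input). This is obtained by first showing that early extinction from the level $N^{b+\delta}$ is rare---most cleanly via an exponential Lyapunov function $(1-cs_N)^{X_g}$ that is a supermartingale on $\{X_g\in[1,\lfloor\eps N\rfloor]\}$, yielding super-polynomially small extinction probability---and then propagating the multiplicative drift inequality along non-extinct paths. The parameter condition $\eps<\delta/(2-2\eta-\delta)$ emerges when balancing the drift-growth exponent $A(1-\eps)$ against the bootstrap-derived error terms.
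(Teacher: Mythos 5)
Your high-level strategy (one-step drift and variance estimates, then Chebyshev over $\sim s_N^{-1}\log N$ generations) matches the paper's, and the one-step moment computations are essentially correct. However, the implementation via the Doob decomposition $\widehat X_g = X_0 + A_g + M_g$ of the stopped process has a genuine gap precisely at the bootstrap step you flag, and the sketch you offer does not close it. The compensator satisfies
\begin{align}
A_{g^*} = \sum_{j<g^*\wedge\tau_2}\bigl(\E[X_{j+1}\mid\mathcal F_j]-X_j\bigr) \gtrsim (1-\eps)s_N\sum_{j<g^*\wedge\tau_2}X_j,
\end{align}
so to get $A_{g^*}\gtrsim N$ one needs $\sum_j X_j \gtrsim s_N^{-1}N$. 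Ruling out early extinction via a Lyapunov supermartingale $(1-cs_N)^{X_g}$ at best keeps $X_j$ above roughly $N^{b+\delta}$ with high probability, which yields only $\sum_{j<g^*}X_j\gtrsim g^*N^{b+\delta}\sim s_N^{-1}N^{b+\delta}\log N$, hence $A_{g^*}\gtrsim N^{b+\delta}\log N\ll N$ since $b+\delta<1$. What is actually needed is \emph{geometric} growth of the path sum, and lower-bounding that is precisely the circularity the bootstrap is supposed to resolve; ``propagating the multiplicative drift inequality along non-extinct paths'' is the statement of the problem, not a solution.

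The paper avoids this entirely by not decomposing $\mathcal X$ itself. It constructs a stochastic lower bound $\widetilde{\mathcal X}$ whose transition kernel below $\eps N$ is $\text{Bin}\bigl(N,\sum_{i\le\widetilde X_g}W_i/(1-s_N\sum_{i>\eps N}W_i)\bigr)$ — that is, the state-dependent denominator in \eqref{benW} is replaced by the worst-case, state-\emph{in}dependent value at level $\eps N$ — and which is continued above $\eps N$ as a $\text{Pois}(Y_1 q_N)$ Galton--Watson process. This makes the one-step conditional mean exactly $m(x)=q_N x$ for \emph{all} $x$ with $q_N=1+(1-\eps)s_N+O(s_N^2)$, so $\E[\widetilde X_g]=q_N^gX_0$ is deterministic, and the variance recursion $\V(\widetilde X_g)\le q_N^2\V(\widetilde X_{g-1})+\rho^2 q_N^{g-1}X_0(1+o(1))$ closes in one line. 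Chebyshev is applied at the first $g_0$ with $q_N^{g_0}X_0\ge 2\eps N$, with no bootstrap. To rescue your Doob-decomposition route you would in effect have to reproduce this lower-bounding construction, at which point the decomposition is no longer doing any work.
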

	\begin{proof}
By monotonicity it is enough to prove the claim for $X_0 = \lceil N^{b+\delta} \rceil$.
By definition we have
	\begin{align} \label{transprobX}
	\mathscr L (X_{g+1}|X_g)=\text{Bin}\left( N,\frac{\sum_{i=1}^{X_g} W_i }{\sum_{i=1}^{X_g} W_i+(1-s_N) \sum_{i=X_g +1}^{N} W_i} \right).
	\end{align}
Next we lower-bound $\mathcal{X}$ by the process $\widetilde{\mathcal{X}} = (\widetilde{X}_g)_{g\geq 0}$, $ \widetilde{X}_0 = X_0$, with conditional distribution 
	\begin{align}
	\mathscr L (\widetilde{X}_{g+1}|\widetilde{X}_g)=\text{Bin}\left( N,\frac{\sum_{i=1}^{\widetilde{X}_g} W_i}{ 1-s_N \sum_{i=\eps N +1}^{N} W_i} \right) \label{distribution Xtilde}
	\end{align}
as long as $\widetilde{X}_g \leq \eps N$. If $\widetilde{X}_g > \eps N$ we assume that $\widetilde{X}_{g+1}$ is distributed as a slightly supercritical branching process with Pois$(Y_1 q_N)$ distributed offspring,  where 
\begin{align}\label{defq}
q_N = N \mathbb E\left[\frac{W_1}{1-s_N\sum_{i=\eps N+1}^N W_i}\right].
\end{align}
We will see that by this definition in each generation the expectation of $\widetilde {\mathcal X}$ increases  
by the factor $q_N$, see \eqref{variance and exp} and \eqref{expb}. The generation-wise increase of the variance conditioned on the current state can be estimated from above by a factor $\rho^2(1+ o(1))$, see \eqref{variance and exp} and \eqref{variance estimate}, leading to an iterative estimate on the variance of the form \eqref{varianceEst}. 
As long as $X_g \geq \widetilde{X}_g$, the success probability in the mixed Binomial distribution on the r.h.s. of \eqref{transprobX} dominates the corresponding one on the r.h.s. of \eqref{distribution Xtilde}.
	Consequently, starting $\widetilde{X}$ and $X$ both in $\lfloor N^{b+\delta} \rfloor$ we can couple them, such that $\widetilde{X}_g \leq X_g$ as long as $\widetilde{X}$ did not cross the level $\eps N$.
	 In particular, we have for $\widetilde{\tau}=\inf \{ g \geq 0 : \widetilde{X}_g \notin \{1,2,...,\eps  N\} \}$
	\begin{align}
	\mathbb P\big(X_{\tau} \geq \eps N\big) \geq \mathbb P\big(\widetilde{X}_{\widetilde{\tau}}  \geq \eps N\big). 
	\label{Hitting Probability}
	\end{align}
	To show $\pp{\widetilde{X}_{\widetilde{\tau}} \geq \eps N} =  1-O(N^{-\delta'})$  we will estimate the first and second moment of $\widetilde{X}_{g_0}$ for a suitably chosen $g_0 \in \N$ and then use Chebyshev's inequality to show that $\widetilde{X}_{g_0}$ is above $\eps N$ with sufficiently high probability. For this purpose we consider $m(x)$ and $v(x)$, the one-step conditional expectation and variance of $\widetilde{X}$ at $x \in \N$, that is
	\begin{align}
	m(x)=\EE{\widetilde{X}_1|\widetilde X_0=x}, \qquad v(x)=\VV{\widetilde{X}_1|\widetilde X_0=x}.
	\end{align}
	From the definition of $\widetilde{\mathcal X}$ as a branching process above $\eps N$ we have for $x >\eps N$
	\begin{align}
		m(x)=q_N x, \qquad v(x)= \rho^2 x (1+o(1)). \label{variance and exp}
	\end{align}
	Next we show that $m(x)$ and $v(x)$ fulfil relations similar to \eqref{variance and exp} also for $x \leq \eps N$, which will allow to estimate the expectation and the variance of ${\widetilde{X}}_{g_0}$.
	
For $x \leq \eps N$ we have due to \eqref{distribution Xtilde}
	\begin{align}
	m(x) &= N \EE{\frac{\sum_{i=1}^{x} W_i}{ 1-s_N \sum_{i=\eps N +1}^{N} W_i}}\\
	&= N \EE{\sum_{i=1}^{x} W_i \left(1+s_N \sum_{i=\eps N +1}^{N} W_i+ O(s_N^2) \right) }\\
	&=x( 1+s_N (1-\eps) N^2 \EE{W_1 W_{\eps N +1} } + O(s_N^2))\\
	&=\blue{x\left ( 1+s_N (1-\eps)(1+O(N^{-1})) + O(s_N^2)\right) }\\
	&= x \left(1+(1-\eps ) s_N + O(s_N^2)\right).
	\end{align}
	\blue{ In the penultimate equality we used $\EE{W_1W_2}= \frac{1}{N^2} + O(N^{-3})$ which results from the fact
	\begin{align}
		1=\EE{\left(\sum_{i=1}^{N}W_i\right)^2 }=N \EE{W_1^2}+ N(N-1)\EE{W_1W_2}
	\end{align}	
	and \eqref{second moment W}}. Consequently, we have for all $x\in \mathbb N$, recalling \eqref{defq}, 
	\begin{align}\label{expb}
	m(x)= x q_N = x \left(1+(1-\eps ) s_N + O(s_N^2)\right).
	\end{align}

Next we analyse $v(x)$, again for $x \leq \eps N$. In view of \eqref{distribution Xtilde}, a decomposition of the variance gives
\begin{align}
v(x)&=\VV{N \frac{\sum_{i=1}^{x} W_i}{1-s_N \sum_{i=\eps N +1}^N W_i} } + \EE{ N \frac{\sum_{i=1}^{x} W_i}{1-s_N \sum_{i=\eps N +1}^N W_i} \left( 1- \frac{\sum_{i=1}^{x} W_i}{1-s_N \sum_{i=\eps N +1}^N W_i}\right)} \\
&\leq \blue{  \EE{ \left( N \frac{\sum_{i=1}^{x} W_i}{1-s_N \sum_{i=\eps N +1}^N W_i}\right)^2} - \EE{N \frac{\sum_{i=1}^{x} W_i}{1-s_N \sum_{i=\eps N +1}^N W_i}}^2 + \EE{ N \frac{\sum_{i=1}^{x} W_i}{1-s_N}} } \\
&\blue{ \leq \EE{ \left( N \frac{\sum_{i=1}^{x} W_i}{1-s_N}\right)^2}- \EE{N\sum_{i=1}^{x} W_i}^2 + \EE{ N \frac{\sum_{i=1}^{x} W_i}{1-s_N} }. }
\end{align}
Because of the negative correlation of the $W_i$, the sum of the first and the second term is not larger than $x N^2 \VV{W_1}$, which because of \eqref{second moment W} is $\le x (\rho^2 -1) + O(N^{-1})$. Since the third term is $x(1 + O(s_N))$, we  have for all $x \le \eps N$
\begin{align}
v(x) \leq \rho^2 x (1+o(1)). \label{variance estimate}
\end{align}
Combining \eqref{expb} and \eqref{variance estimate} allows us to estimate the variance $\VV{\widetilde{X}_g}$ for $g\in \N$, again by decomposing the variance:
\begin{align}\label{varianceEst}
\VV{\widetilde{X}_g}&= \VV{\EE{\widetilde{X}_g|\widetilde{X}_{g-1}}}+\EE{\VV{\widetilde{X}_g|\widetilde{X}_{g-1}}}\\
&= \VV{m(\widetilde{X}_{g-1})} + \EE{v(\widetilde{X}_{g-1})} \\
&\leq q_N^2 \VV{\widetilde{X}_{g-1}}+ \rho^2 \EE{\widetilde{X}_{g-1}} (1+o(1)) \\
&=q_N^2 \VV{\widetilde{X}_{g-1}}+ \rho^2 q_N^{g-1} \widetilde{X}_0 (1+o(1)).
\end{align}
Iterating this argument yields
\begin{align}
	\VV{\widetilde{X}_{g}}&=\rho^2 \widetilde{X}_0 q_N^{g-1} \sum_{j=0}^{g-1} q_N^j (1+o(1)) \\
	&= \rho^2 \widetilde{X}_0 q_N^{g-1} \frac{q_N^g-1}{q_N-1} (1+o(1)).
\end{align}

Choose the minimal $g_0 \in \N $ such that $2 \eps N \leq \EE{\widetilde{X}_{g_0}} = q_N^{g_0} X_0$, which yields recalling the initial condition $X_0 = \lceil N^{b+\delta}\rceil$ 
\begin{align*}
g_0= \left\lceil  \frac{\log (2 \eps N X_0^{-1})}{\log q_N} \right\rceil = \left\lceil  \frac{\log (2 \eps N^{1-b-\delta})}{(1-\eps) s_N + O(s_N^2)} \right\rceil.
\end{align*}
 Applying Chebyshev's inequality with $\widetilde{X}_0 = X_0$, we obtain
\begin{align}
\PP \left( |\widetilde{X}_{g_0} - \EE{\widetilde{X}_{g_0} }| \geq \eps N \right) &\leq \frac{\rho^2 \widetilde{X}_0 q_N^{g_0-1} \frac{q_N^{g_0}-1}{q_N-1} (1+o(1)) }{\eps^2 N^2}\\ \\
&\leq \frac{\rho^2 N^{b+\delta} q_N^{2g_0} \frac{N^b}{(1-\eps)} (1+o(1))}{\eps^2 N^2}\\
&= \frac{\rho^2}{\eps^2 (1-\eps)} N^{2b+\delta-2} (1+(1-\eps) s_N + O(s_N^2))^{2g_0} (1+o(1)) \\
&\leq c_{\rho,\eps} N^{2b+\delta-2} \exp( 2g_0 s_N (1+O(s_N))) (1+o(1))\\
&\leq c_{\rho,\eps} N^{2b+\delta-2} N^{\frac{2}{1-\eps} (1-b-\delta)} (1+o(1)) \\
&= O(N^{-\delta'}),
\end{align}
for some small $\delta'>0$ due to the assumptions on $\eps$.

Since $\EE{\widetilde{X}_{g_0}}\geq 2\eps N$, this implies
\begin{align}
\pp{\widetilde{X}_{\widetilde{\tau}}  \geq \eps N}\geq \PP(\widetilde{X}_{g_0} \geq \eps N ) \geq 1- O(N^{-\delta'})
\end{align}
and due to \eqref{Hitting Probability} this finishes the proof.
\end{proof}

\subsection{Third phase: from $\eps N$ to $N$} \label{Sec Third Phase}

Lemma \ref{Lemma hitting N} below concerns the last step of the proof, showing that once the process $\mathcal X$ has reached the level $\floor{\eps N}$, it goes to fixation with high probability. Our proof relies on a representation of the fixation probability  of $\mathcal X$ in terms of (a functional of) the equilibrium state $A_{\rm eq}:=A_{\rm eq}^{(N)} $ of the counting process $\mathcal A:= \mathcal{A}^{(N)}=(A_m)_{ m \geq 0}$ of the {\em potential ancestors} in  
 the time discrete Cannings ancestral selection graph as provided by \cite{BoeGoPoWa1}.  The process $\mathcal A^{(N)}$ is called {\em Cannings ancestral selection process (CASP)} in \cite{BoeGoPoWa1}; for fixed $N$, it is a recurrent, $[N]$-valued Markov chain whose transition probabilities are specified in \cite{BoeGoPoWa1} Section 2.3.  
 
 Theorem 3.1 and Formula (3.2) (see also Corollary 3.3) in \cite{BoeGoPoWa1}  provide the following sampling duality representation of the fixation probability of $\mathcal X$ when started with $k$ individuals:
 \begin{align} \label{duality}
 \mathbb P_k(\mathcal X \mbox{ eventually hits } N) = 1-\mathbb E\left[ \frac{(N-k)(N-k-1)\cdots (N-k-A_{\rm eq}+1)}{N(N-1)\cdots (N-A_{\rm eq}+1)}\right].
 \end{align}
Intuitively, this says that $\mathcal X$ goes extinct if and only if a random sample of (random) size $A_{\rm eq}$, drawn without replacement from the population of size $N$, avoids the $k$ beneficial individuals. 

Formula \eqref{duality} implies
\begin{align} \label{fixhighprob}
 \mathbb P_{\ceil{\eps N}}(\mathcal X \mbox{ eventually hits } N) \ge 1-\mathbb E \left[(1-\eps)^{A_{\rm eq}^{(N)}}\right].
 \end{align}
 The representation of the transition probabilities of $\mathcal A$ in \cite{BoeGoPoWa1} Section 2.3 in terms of two half steps yields that for fixed $N$ CASPs with different selection parameters can be coupled in such a way that $A_{\rm eq}^{(N)}$ is increasing in $s_N$. Take a sequence $(\tilde s_N)$ satisfying $\tilde s_N \le s_N$ and Condition (1.2) in \cite{BoeGoPoWa1}, i.e.
 \begin{align}
 N^{-1+\eta} \le \tilde s_N \le  N^{-2/3+\eta}.
 \end{align}
Let $\tilde A_{\rm eq}^{(N)}$ be the equilibrium state belonging to $\tilde s_N$ (and to the same Dirichlet-type paintbox as that of $\mathcal X$). The central limit result \cite{BoeGoPoWa1}, Corollary 6.10, implies that $\tilde A_{\rm eq}^{(N)}\to \infty$ in probability as $N\to \infty$. Because of the just mentioned monotonicity in the selection coefficient, the same convergence holds true for the sequence $\left(A_{\rm eq}^{(N)}\right)$. The following lemma is thus immediate from \eqref{fixhighprob} and dominated convergence:

\begin{lemma}(From $\varepsilon N$ to $ N$ with high probability) \label{Lemma hitting N}\qquad \\
Let $\mathcal{X}$ be a Cannings frequency process with $X_0= k \geq \eps N$ for some $0<\eps<1/2$. Assume  that Conditions \eqref{Dirichlettypeweights}, \eqref{Generating function Y} and \eqref{Condition s_N} are fulfilled. Define $\tau_3: =\inf\{g \geq 0: X_g \in \{0,N \} \}$. Then
	\begin{align}
	\PP_k(X_{\tau_3} =N) = 1-o(1).  
	\end{align}
\end{lemma}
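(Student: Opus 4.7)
The approach is essentially laid out in the discussion preceding the lemma: invoke the sampling duality identity \eqref{duality} from the companion paper \cite{BoeGoPoWa1}, which recasts the complementary fixation probability as the probability that a sample of (random) size $A_{\rm eq}^{(N)}$, drawn without replacement from $[N]$, avoids the $k$ beneficial individuals. When $k \geq \eps N$, each of the factors $\frac{N-k-j}{N-j}$ is at most $1-\eps$, which yields the bound \eqref{fixhighprob}. Hence it suffices to show
\begin{align}
 \mathbb E\left[(1-\eps)^{A_{\rm eq}^{(N)}}\right] \longrightarrow 0 \quad \text{as } N \to \infty,
\end{align}
and since the integrand is bounded by $1$, this reduces by dominated convergence to verifying that $A_{\rm eq}^{(N)} \to \infty$ in probability.

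For the latter, the key ingredient is monotonicity of the CASP in the selection coefficient. The two-half-step representation of the transition probabilities of $\mathcal A^{(N)}$ given in Section 2.3 of \cite{BoeGoPoWa1} allows us to couple, for a fixed paintbox, the Markov chains associated with two different selection strengths so that the chain with the larger coefficient dominates the one with the smaller. This monotonicity passes to the stationary distributions, so $A_{\rm eq}^{(N)}$ stochastically dominates $\tilde A_{\rm eq}^{(N)}$ as soon as the auxiliary selection coefficient satisfies $\tilde s_N \leq s_N$. I would then pick $\tilde s_N$ inside the moderately weak regime $N^{-1+\eta}\leq \tilde s_N \leq N^{-2/3+\eta}$ of \cite{BoeGoPoWa1}, concretely $\tilde s_N := N^{-2/3+\eta}$, which by \eqref{Condition s_N} lies below $s_N \geq N^{-1/2+\eta}$ for $N$ large. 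The central limit theorem for $\tilde A_{\rm eq}^{(N)}$ in Corollary 6.10 of \cite{BoeGoPoWa1} implies $\tilde A_{\rm eq}^{(N)} \to \infty$ in probability, and the stochastic domination transfers this to $A_{\rm eq}^{(N)}$; plugging back into \eqref{fixhighprob} finishes the argument.

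There is essentially no new obstacle in this last phase, as all the heavy lifting has been done in \cite{BoeGoPoWa1}. The only minor point to be careful about is that the monotonicity of the stationary law in $s_N$ really does follow from the half-step description, which needs to be read off from the construction of the CASP transitions rather than proved afresh; and that the chosen $\tilde s_N$ simultaneously sits below $s_N$ and inside the admissible moderately weak window, which is secured by the upper bound $s_N \geq N^{-1/2+\eta}$ coming from \eqref{Condition s_N}.
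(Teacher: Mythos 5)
Your proposal is correct and follows essentially the same route as the paper: the duality formula \eqref{duality}, the bound \eqref{fixhighprob}, the monotone coupling of the CASP in the selection parameter via the half-step transitions, comparison with an auxiliary $\tilde s_N$ in the moderately weak regime, and Corollary~6.10 of \cite{BoeGoPoWa1} to get $A_{\rm eq}^{(N)}\to\infty$ in probability, followed by dominated convergence. (One small terminological slip: $s_N\ge N^{-1/2+\eta}$ is the lower bound in \eqref{Condition s_N}, not the upper bound, but the inequality you use is the right one.)
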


\section{Discussion}
The analysis of fixation probabilities of slightly beneficial mutants is at the heart of population genetics; some seminal and more modern references are given in the Introduction. Our main result concerns Haldane's asymptotics \eqref{Haldaneasymptotic} for the fixation probability in a regime of moderate selection.

Our framework is that of Cannings models with selection (as reviewed in Section \ref{CanningsWithSelection}), where the corresponding neutral genealogies are assumed to belong to the domain of attraction of Kingman's coalescent. 
This class of models is motivated by seasonal reproduction cycles in which within each season a large number of offspring is generated but  only a comparatively small number (concentrated around a carrying capacity $N$) of randomly sampled offspring survive to the next season. In this setting it is reasonable to approximate sampling without replacement by sampling with replacement. Thus, under the assumption of neutrality, the probability that the $j$-th offspring that survives till the next generation is a child of parent $i$ is approximately given by the random weight
\begin{align}
 W_i=\frac{Y_i}{\sum_{\ell=1}^{N} Y_{\ell}},
\end{align}
where $Y_1,\dots,Y_N$ are the sizes of (potential one-generation) offspring of parents $1,\dots,N$. These sizes are assumed to be independent and identically distributed in the present paper, leading to the concept of weights {\em of Dirichlet type}. The subsequent generation then arises by a multinomial sampling with random weights, and to add selection the weights of wildtype parents are decreased by the factor $(1-s_N)$.  For a closely related model with a specific distribution of $Y_i$ (and sampling without replacement) in the context of Lenski’s long-term evolution experiment see \cite{GKWY} and \cite{BGPW}.

We prove Haldane's asymptotics in the case of {\em moderately strong selection}, see Theorem \ref{HaldaneThm},
in which the selection strength $s_N$ obeys
\begin{align}%
	N^{-\frac{1}{2}+\eta} \le s_N \le N^{-\eta}
\end{align}
for some $\eta >0$ and a large population size $N$.
 In the companion paper \cite{BoeGoPoWa1} the range of {\em moderately weak selection} was considered, i.e. in the case
 \begin{align}%
	N^{-1+\eta} \le s_N \le N^{-\frac{1}{2} -\eta}
\end{align}
for some $\eta >0.$ 
Since $s_N \gg N^{-1}$ in the regime of moderate selection, selection acts in this case on a faster timescale than genetic drift.

In \cite{BoeGoPoWa1} an ancestral selection graph for the just described class of Cannings models with selection was defined, and it was shown that the fixation probability $\pi_N$ is equal to the expeced value $\mathbb E \left[\frac{A_{\rm eq}^{(N)}}N\right]$, where $A_{\rm eq}^{(N)}$ is the number of lines of the ancestral selection graph in its equilibrium. While we could analyse directly the asymptotics of that quantity in the regime of moderately weak selection, we were facing too large fluctuations of $A_{\rm eq}^N$ in the regime of moderately strong selection in order to be successful with this approch. Conversely, it turned out that the classical idea of branching process approximation is suitable precisely in that latter regime. 

For highly skewed offspring distributions an asymptotics for the fixation probability arises which is different from \eqref{Haldaneasymptotic}. In cases where the neutral genealogy is attracted by a  Beta$(2-\alpha,\alpha)$-coalescent, \cite{Okada2021} argue that the fixation probability is proportionally to $s_N^{\frac{1}{\alpha-1}}$, if $1\gg s_N \gg N^{-(\alpha-1)}$. Thus  the probability of fixation is substantially smaller than in Haldane's asymptotics, which is reasonable since the offspring variance is diverging as $N\to \infty$. Notably, since the evolutionary timescale of Cannings models in the domain of attraction of Beta-coalescents is of the order $N^{\alpha-1}$, the case $1\gg s_N \gg N^{-(\alpha-1)}$ corresponds to the regime of moderate selection; note also that the case of coalescents being in the domain of attraction of a Kingman coalescent corresponds formally to $\alpha =2$.

\section*{Acknowledgements}
A.G.C. acknowledges CONACyT Ciencia Basica
A1-S-14615 and travel support through the
SPP 1590 and C.P. and A.W. acknowledge partial support through DFG
project WA 967/4-2 in the SPP 1590.
We thank G\"otz Kersting for fruitful discussions, and we are grateful to two anonymous reviewers for their very helpful comments.

 \section*{Conflict of interest}
 The authors declare that they have no conflict of interest.

\bibliography{mybib}
\bibliographystyle{apalike}

\end{document}